\newtheorem{theorem}{Theorem}[section]
\newtheorem{lemma}[theorem]{Lemma}
\newtheorem{proposition}[theorem]{Proposition}
\newtheorem*{Theorem A}{Theorem A}
\newtheorem*{Theorem B}{Theorem B}
\newtheorem*{problem NOP}{Contractive Projection Problem}
\newtheorem*{problem A}{Problem A}
\newtheorem*{problem B}{Problem B}
\theoremstyle{definition}
\numberwithin{equation}{section}
\newcommand{\D}{\mathbb D}
\newcommand{\C}{\mathbb C}
\newcommand{\T}{\mathbb T}
\newcommand{\E}{\mathbb E}
\newcommand{\one}{\mathbf 1}
\newcommand{\cal}{\mathcal}
\newcommand{\bb}{\mathbb}
\newcommand{\ol}{\overline}
\newcommand{\Ran}{\textrm{Ran}\,}
\title[Contractive projections on $H^p$ spaces $(0<p<1)$]{Contractive projections, conditional expectations, and idempotent coefficient multipliers on $H^p$ spaces $(0<p<1)$}
\author{Xiangdi Fu}
\address{Xiangdi Fu: School of Fundamental Physics and Mathematical Sciences, HIAS, University of Chinese Academy of Sciences, Hangzhou, 310024, China}
\email{xdfu@ucas.ac.cn}
\author{Kunyu Guo}
\address{Kunyu Guo: School of Mathematical Sciences, Fudan University, Shanghai, 200433, China}
\email{kyguo@fudan.edu.cn}
\author{Dilong Li}
\address{Dilong Li: School of Mathematical Sciences, Fudan University, Shanghai, 200433, China}
\email{dlli23@m.fudan.edu.cn}
\keywords{Hardy spaces, contractive projections, conditional expectations, coefficient multipliers, finite Blaschke product}
\subjclass[2020]{Primary:  47B91; Secondary: 46E15, 47H09.}
\begin{document}
	%--------------------------------------Abstract----------------------------------------------
	\begin{abstract}
		 In this paper, we investigate contractive projections, conditional expectations, and idempotent coefficient multipliers on the Hardy spaces $H^p(\T)$ for $0<p<1$. For such values of $p$, we first establish a general extension theorem for contractive projections in a probability $L^p$-space. Combining this theorem with the study of conditional expectations on $H^p(\T)$, we characterize a broad class of contractive projections on $H^p(\T)$ that are of particular interest. Furthermore, we apply these results to give a complete characterization of contractive idempotent coefficient multipliers for the Hardy spaces $H^p(\T^d)$ on the $d$-dimensional torus for $0<p<1$ and $1\leq d\leq \infty$. This complements a remarkable result of Brevig, Ortega-Cerd\`{a}, and Seip characterizing such multipliers on $H^p(\T^d)$ for $1\leq p \leq \infty$.
		\end{abstract}
	\maketitle

	%----------------------------------Text of article-------------------------------------------
	\section{Introduction}

	\subsection{A Short Review of the Contractive Projection Problem}

	Let $X$ be a (quasi-)Banach space with (quasi-)norm $\|\cdot \|$. A bounded linear operator $P: X\to X$ is called a projection if $P^2=P$. We always assume that $P\neq 0$, and then $$\|P\|:=\sup_{\|x\|=1} \|Px\| \geq 1.$$ By a {\it contractive projection}, we mean a nonzero projection $P$ with $\|P\|=1$. Contractive projections can be regarded as a generalization of orthogonal projections on Hilbert spaces to the setting of Banach and quasi-Banach spaces, and they constitute a fundamental object of study in approximation theory and in the isometric theory of these spaces. Thus, the following problem arises naturally.

\begin{problem NOP}
Given a (quasi-)Banach space $X$, can we obtain an explicit description of all contractive projections on $X$?
\end{problem NOP}

	However, it seems hopeless to resolve the above problem for an arbitrary space \( X \), as even for certain classical spaces, the known results are far from trivial.

	Below, we list some related references that address the contractive projection problem on various spaces \( X \):
\begin{itemize}
    \item \( X = L^1(\mu) \), with \( \mu \) a probability measure; see R. G. Douglas \cite{Do}.
    \item \( X = L^p(\mu) \), with \( \mu \) a probability measure and \( 0 < p < \infty, p\neq 1,2\); see T. And\^o \cite{An}.
    \item $X=H^\infty(\T)$ and $X=A(\T)$; see P. Wojtaszczyk \cite{Wo79}. 
    \item $X=H^p(\T)$ with $1\leq p<\infty$; see F. Lancien, B. Randrianantoanina, and E. Ricard \cite{LRR}, as well as a recent preprint of the authors \cite{FGL}.
    \item $X= C_0(K)$, with $K$ a locally compact Hausdorff space; see Y. Friedman and B. Russo \cite{FR}.
    \item $X=\text{Sequence spaces}$; see B. Randrianantoanina \cite{Ran04,Ran98,Ran}. 
    \item $X=\text{Paley-Wiener spaces}$; see A. Kulikov and I. Zlotnikov \cite{KZ}. 
    \item $X= \cal C_\infty$, the space of compact operators on a separable complex Hilbert space, and $X=\cal C_p$ the von Neumann-Schatten class with $1\leq p<\infty$; see J. Arazy and Y. Friedman \cite{AF,AF+}.
    \item $X=\text{vector-valued function spaces}$; see Y. Raynaud \cite{Ray} and B. Lemmens, B. Randrianantoanina and O. van Gaans \cite{LRG07}.
\end{itemize} 

We do not attempt to provide a complete account of all relevant references due to the sheer volume of literature on these topics. Nonetheless, we believe that the excellent survey by B. Randrianantoanina \cite{Ran} offers valuable historical insights and a detailed bibliography on these topics. 

It is worth mentioning that, recently, O. F. Brevig, J. Ortega-Cerd\`{a}, and K. Seip \cite{BOS} completely characterized the contractive projections induced by idempotent coefficient multipliers on $H^p(\T^d)$ for $1\leq p\leq \infty$ and $1\leq d\leq \infty$. Here $\T^d$ stands for the $d$-dimensional torus (countably infinite when $d=\infty$), endowed with the normalized Haar measure $m_d$. For $0<p<\infty$, the Hardy space $H^p(\T^d)$ is defined as the closure of analytic polynomials in $L^p(\T^d)$. Let $\mathbb Z^{(d)}:=\bigoplus_{j=1}^d \mathbb Z$ be the dual group of $\T^d$ and let $\mathbb N_0^{(d)}:=\bigoplus_{j=1}^d \mathbb N_0$ be the positive cone of $\mathbb Z^{(d)}$. For a subset $\Gamma$ of $\mathbb N_0^{(d)}$, the associated idempotent coefficient multiplier $P_\Gamma$ is defined on the space of analytic polynomials by
\begin{align}\label{eq1.1}
  P_{\Gamma} : \,\,\sum_{\alpha \in \mathbb N_0^{(d)}} c_\alpha z^\alpha \mapsto \sum_{\alpha \in \Gamma} c_\alpha z^\alpha.
\end{align}
Here, multi-indices are used in the usual way: for $z=(z_j)_{j=1}^d \in \mathbb \T^d$ and $\alpha=(\alpha_j)_{j=1}^d \in \mathbb N_0^{(d)}$, $z^\alpha:= \prod_{j=1}^d z_j^{\alpha_j}.$ Specifically, it is shown in \cite{BOS} that when $1\leq p\leq \infty$ is not an even integer, $P_\Gamma$ extends to a contraction on $H^p(\T^d)$ if and only if $\Gamma=\Lambda\cap \mathbb N^{(d)}_0$ for some coset $\Lambda \subseteq \mathbb Z^{(d)}$. Moreover, for even integers $p=2k\geq 4$, the contractivity of $P_\Gamma$ on $H^{2k}(\T^d)$ depends on $k$, $d$, and the affine dimension of $\Gamma$ in an interesting way; see \cite[Theorem 1.2]{BOS} for details. Actually, they provided an effective algorithm \cite[Theorem 1.3]{BOS} to examine whether $P_\Gamma$ extends to a contractive projection on $H^{2k}(\T^d)$. 

The remarkable results in \cite{BOS} cover the case $1\leq p\leq \infty$, while the range $0<p<1$ remains unknown. This inspires us to investigate contractive projections on Hardy spaces $H^p(\T^d)$ for $0<p<1$ and $1\leq d\leq \infty$. To this aim, we will first study contractive projections on $H^p(\T)$ that leave the constants intact, which are closely connected to the conditional expectations associated with the $\sigma$-algebras generated by inner functions. Then we apply these results to provide a complete characterization of contractive idempotent coefficient multipliers on $H^p(\T^d)$ for $0<p<1$ and $1\leq d\leq \infty$. 

We mention that, in a recent preprint\cite{FGL}, the authors also provided explicit expressions for all contractive projections on $H^p(\T)$ for $1\leq p<\infty$, which requires truly different approaches.

\subsection{Conditional expectations and the Douglas-And\^o theorem} In this subsection we review some basic properties of conditional expectations and the solution to the Contractive Projection Problem for $L^p(\mu)=L^p(\Omega, {\bf \Sigma}, \mu)$. Here and throughout the paper, $(\Omega, \mathbf{\Sigma}, \mu)$ stands for a complete probability space. Suppose $\mathbf{\Sigma}'$ is a complete sub $\sigma$-algebra of $\bf{\Sigma}$. The conditional expectation associated with $\mathbf{\Sigma}'$ is an operator $$\E(\cdot| \mathbf{\Sigma}'): L^1(\Omega, {\bf \Sigma},\mu) \to L^1(\Omega, \mathbf{\Sigma}', \mu); \quad f\mapsto \E(f| \mathbf{\Sigma}'),$$ uniquely determined by the following identities:
	\begin{align*}
	\int_{F} \E(f| \mathbf{\Sigma}') d\mu = \int_{F} f d\mu, \quad  \forall F\in \mathbf{\Sigma}'.
	\end{align*}
	Indeed, the existence and the uniqueness of $\E(f| \mathbf{\Sigma}')$ follow from the Radon-Nikodym theorem. The conditional expectation $\E(\cdot| \mathbf{\Sigma}')$ is a contractive projection from $L^1(\Omega,\bf{\Sigma}, \mu )$ onto its closed subspace $L^1(\Omega,\mathbf{\Sigma}', \mu )$. By Jensen's inequality, $\mathbb{E}(\cdot | \mathbf{\Sigma}')$ is also contractive with respect to the $L^p$-norm for all $1 \leq p \leq \infty$. In particular, when $p=2$, $\E(\cdot| \mathbf{\Sigma}')$ is the orthogonal projection from $L^2(\Omega, \bf{\Sigma}, \mu)$ onto $L^2(\Omega, \mathbf{\Sigma}', \mu)$. The conditional expectation, introduced by A. Kolmogorov in 1933, is a fundamental concept in probability theory.  We refer to \cite{Du} for further properties and applications of conditional expectation. 
	
	As well known, for any complete $\sigma$-algebras $\mathbf{\Sigma}'$, the corresponding conditional expectations $\E(\cdot |\mathbf{\Sigma}')$ is a contractive projection on $L^p(\Omega, \bf{\Sigma}, \mu)$ where $1\leq p\leq \infty$, which leave constants intact. In his seminal paper \cite{Do}, Douglas showed that when $p=1$, this operator-theoretic property characterizes conditional expectations. Later, T. And\^o proved that Douglas's result also holds true for $1<p<\infty$, $p\neq 2$. We reformulate their result below as the Douglas-And\^o theorem.
	
	\begin{Theorem A}[The Douglas-And\^o Theorem]\label{1.1}
	Let $1\leq p<
	\infty$, $p\neq 2$ and let $(\Omega, {\bf \Sigma}, \mu)$ be a complete probability space. Suppose $P$ is a contractive projection on $L^p(\Omega, {\bf \Sigma}, \mu)$ satisfying $P{\mathbf 1}={\mathbf 1}$, then there exists a complete sub $\sigma$-algebra $\mathbf{\Sigma}'$ of ${\bf \Sigma}$ such that $P=\E(\cdot |\mathbf{\Sigma}')$.
	\end{Theorem A}

\noindent Based on Theorem A and a transfer argument, they completely solve the Contractive Projection Problem for $L^p(\Omega, {\bf \Sigma}, \mu)$ (the case $p=1$ is due to Douglas and the case $p\neq 1$ is due to And\^o); see \cite[Theorem 2]{An}. We note that, in addition to Theorem A, there have been several further studies on operator-theoretic characterizations of conditional expectations; see \cite{Ran05, Ran, DHP90}.

In contrast with the case $p \geq 1$, And\^o proved that, for $0<p<1$, only very few contractive projections exist on $L^p(\mu)$.

\begin{Theorem B}[And\^o's theorem] 
	Let $0<p<1$. If $P$ is a contractive projection on $L^p$ satisfying $P\one =\one$, then $P$ is the identity.
\end{Theorem B}

Furthermore, by applying the transfer argument, And\^o proved that any contractive projection $P$ on $L^p(\mu)$ ($0<p<1$) is precisely of the form $$P=M_{\chi_B} + V,$$ where $B\in {\bf \Sigma}$ is of positive measure, $M_{\chi_B}$ is the multiplication operator with symbol $\chi_B$, and $V$ is an isometry on $L^p(\mu)$ satisfying $$M_{\chi_B}V= V, \quad VM_{\chi_B}=0.$$ 

\subsection{Main results}
As mentioned earlier, we wish to characterize contractive projections on the Hardy space $H^p(\T)$, which is a closed subspace of $L^p(\T)$. We first establish the following more general theorem.

	\begin{theorem}\label{1.1}
	Let $0<p<1$. Let $X$ be a closed subspace of $L^p(\mu)$ containing the constants and let $P:X\to X$ be a contractive projection satisfying $P{\bf 1}={\bf 1}$. Then there exists a complete sub $\sigma$-algebra $\bf \Sigma'$ of ${\bf \Sigma}$ such that 
	\begin{align}\label{eq1.2}
		Pf=\E(f| {\bf \Sigma'}), \quad \forall f\in \mathcal E,
	\end{align}
	where $\mathcal E:= \big\{f\in X: f\in L^\infty(\mu), Pf\in L^\infty(\mu) \big\}.$
	\end{theorem}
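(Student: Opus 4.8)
The plan is to show that $P$ coincides with a conditional expectation on $\mathcal E$ by identifying the correct $\sigma$-algebra and then verifying the two defining properties of $\E(\cdot|\mathbf{\Sigma}')$. Let $Y:=\Ran P\subseteq X$ and observe that $Y\cap L^\infty=P\mathcal E$. I would take $\mathbf{\Sigma}'$ to be the completion of the $\sigma$-algebra generated by $Y\cap L^\infty$. Since for $f\in\mathcal E$ the image $Pf$ lies in $Y\cap L^\infty$, it is automatically $\mathbf{\Sigma}'$-measurable, so the whole problem reduces to the averaging identity $\int_F (f-Pf)\,d\mu=0$ for every $F\in\mathbf{\Sigma}'$; once this holds, the uniqueness of conditional expectation (via Radon--Nikodym) yields $Pf=\E(f|\mathbf{\Sigma}')$.

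The engine of the proof is a perturbation-and-differentiation argument exploiting $P\one=\one$ and $Pu=u$ for $u\in Y$. Fix $u\in Y\cap L^\infty$, $g\in\mathcal E$, and a small $\zeta\in\C$, and set $w:=\one+\zeta u$, so that $P(w+tg)=w+tPg$ for $t\in\R$. Contractivity gives $\|w+tPg\|_p^p\le\|w+tg\|_p^p$ with equality at $t=0$, hence $t=0$ minimizes the nonnegative difference and the $t$-derivatives agree there. Because $g,u$ are bounded and $\textrm{Re}(w)\ge 1-|\zeta|\,\|u\|_\infty>0$ for small $\zeta$, one may differentiate under the integral; computing $\partial_t|w+tg|^p\big|_{t=0}=p\,|w|^{p-2}\textrm{Re}(\bar w g)$ and repeating with $ig$ in place of $g$ yields the stationarity identity
\begin{align*}
\int |w|^{p-2}\,\bar w\,(g-Pg)\,d\mu=0,\qquad w=\one+\zeta u.
\end{align*}
The decisive observation is the factorization $|w|^{p-2}\bar w=(\one+\zeta u)^{(p-2)/2}(\one+\bar\zeta\bar u)^{p/2}$, valid on the right half-plane where $w$ lives. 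Expanding both principal-branch binomial series in $\zeta$ and $\bar\zeta$ and interchanging sum and integral (legitimate since the integrand is uniformly bounded and the series converge uniformly on a small polydisc), the coefficients $\binom{(p-2)/2}{j}$ and $\binom{p/2}{k}$ are all nonzero because $0<p<1$ forces $(p-2)/2,\,p/2\notin\mathbb N_0$; extracting the coefficient of $\zeta^j\bar\zeta^k$ gives $\int u^j\bar u^k(g-Pg)\,d\mu=0$ for all $j,k\ge0$. Running the same computation with $w=\one+\sum_l\zeta_l u_l$ and the analogous multinomial factorization upgrades this to
\begin{align*}
\int \Big(\prod_l u_l^{\,j_l}\,\bar u_l^{\,k_l}\Big)(g-Pg)\,d\mu=0
\end{align*}
for all finite families $u_l\in Y\cap L^\infty$ and all exponents; that is, $g-Pg$ annihilates the unital $*$-algebra $\mathcal A$ generated by $Y\cap L^\infty$.

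To conclude, I would take $g=f\in\mathcal E$, so that $h:=f-Pf\in L^\infty(\mu)\subseteq L^2(\mu)$ with $h\perp\mathcal A$. Since $\mathcal A$ is a conjugation-closed algebra of bounded functions containing the constants and generating $\mathbf{\Sigma}'$, a functional monotone-class argument lets me approximate $\chi_F$, for any $F\in\mathbf{\Sigma}'$, by elements of $\mathcal A$ in $L^2(\mu)$ with uniformly bounded sup-norms; as $h\in L^2(\mu)$ this forces $\int_F h\,d\mu=0$, which is exactly the averaging identity required. I expect the main obstacle to be the analytic justification in the middle step --- differentiating the quasi-norm under the integral sign and interchanging the binomial series with the integral when $0<p<1$ --- together with the structural point of producing the conjugates $\bar u$, and hence the full $*$-algebra; both are handled precisely by working with the complex perturbation $\one+\zeta u$ and the factorization of $|w|^{p-2}\bar w$, which is where the condition $p\neq 1,2$ (automatic here) enters through the non-vanishing of the binomial coefficients. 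The restriction to $\mathcal E$ is essential: boundedness of $f$ and $Pf$ is what makes $f-Pf$ integrable against $\mathcal A$ and legitimizes every interchange above.
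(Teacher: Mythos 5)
Your proof is correct, and its engine is the same as the paper's: you choose the same $\sigma$-algebra (the one generated by $P(\mathcal E)=\Ran P\cap L^\infty(\mu)$), you establish the same orthogonality identity $\int |w|^{p-2}\bar w\,(g-Pg)\,d\mu=0$ for $w=\one+\zeta u$ with $u\in \Ran P\cap L^\infty(\mu)$ (this is exactly the paper's Lemma \ref{2.1}, whose variational proof the paper omits and you supply, with the correct justification for differentiating under the integral via $\mathrm{Re}\,w$ bounded below), and you perform the same binomial/multinomial expansion in $\zeta,\bar\zeta$ to extract the mixed moments $\int u^{j}\bar u^{k}(g-Pg)\,d\mu=0$, noting correctly that $0<p<1$ makes all binomial coefficients nonzero. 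Where you genuinely diverge is the passage from these moments to the conditional expectation. The paper handles a finite family $g_1,\dots,g_n$ by pushing the measure $(\overline{f-Pf})\,d\mu$ forward to a compactly supported measure on $\C^n$ and killing it with Stone--Weierstrass, and then reaches the full $\sigma$-algebra via two extra lemmas: countable generation of measurable sets (Lemma \ref{2.2}) and Doob's martingale convergence (Lemma \ref{2.3}). You instead observe that $h=f-Pf$ annihilates the unital $*$-algebra $\mathcal A$ generated by $\Ran P\cap L^\infty(\mu)$ and invoke a functional monotone-class argument, which reaches all of $\mathbf{\Sigma}'$ in one stroke and dispenses with the pushforward construction, Stone--Weierstrass, and the martingale theorem --- a mild but genuine simplification. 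One phrase in your conclusion should be tightened: the claim that $\chi_F$ can be approximated in $L^2(\mu)$ by elements of $\mathcal A$ with \emph{uniformly bounded} sup-norms is true but not free (it is essentially a Kaplansky-type density statement); the clean route is to apply the monotone class theorem directly to the set $\mathcal H$ of bounded real measurable functions $\phi$ satisfying $\int \phi h\,d\mu=0$, which contains the constants, contains the real elements of $\mathcal A$ (a multiplicative class generating $\sigma(\mathcal A)$, since $\mathcal A$ is conjugation-closed), and is closed under bounded monotone limits by dominated convergence because $h\in L^1(\mu)$; this gives $\int_F h\,d\mu=0$ for every $F\in\sigma(\mathcal A)$, and passing to the completion costs nothing.
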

	\noindent The proof of Theorem \ref{1.1} will be presented in Section 2. We note that the space $\mathcal E$ is nonzero, since $\one\in \mathcal E$, but for certain choices of $X$ and $P$, it may consist solely of constant functions. In such cases, Theorem \ref{1.1} does not yield any meaningful information. But Theorem \ref{1.1} applies well to our study of contractive projections on the Hardy spaces. 
	
	We recall that for $0<p<\infty$, the Hardy space $H^p(\T)$ consists of all functions in $L^p(\T)$ that can be approximated by analytic polynomials in the $L^p$-norm. By the boundedness of evaluation functionals on $\mathbb{D}$, every $f\in H^p(\T)$ can be naturally identified with an analytic function on $\mathbb{D}$ whose radial limit is equal to $f$ almost everywhere on $\T$ \cite{Du70}.

Let $0<p<1$, and let $P$ be a contractive projection on $H^p(\mathbb{T})$ such that $P{\bf 1}={\bf 1}$. We assume $P z^{n}$ is bounded for each $n\geq 1$, then it follows from Theorem \ref{1.1} that $P=\E(\cdot |{\bf \Sigma}')$ on $H^p(\T)\cap L^1(\T)=H^1(\T)$. Thus this conditional expectation $\E(\cdot |{\bf \Sigma}')$ must leave $H^1(\T)$ invariant. According to an elegant theorem of Aleksandrov \cite{Al}, any such ${\bf \Sigma}'$ is either trivial or generated by an inner function $\eta$ vanishing at the origin.

Indeed, if $p\geq 1$, then for any inner functions $\eta$ vanishing at the origin, the conditional expectation $\mathbb{E}(\cdot|\eta)$ is a contractive projection on $H^p(\T)$. However, for $0<p<1$, $\mathbb{E}(f |\eta)$ is well defined only for integrable functions $f$, and this densely defined operator may even be unbounded on $H^p(\T)$. The following theorem, proved in Section 3, examines when $\mathbb{E}(\cdot| \eta)$ can be extended boundedly to $H^p(\T)$ for $0<p<1$, and evaluates its norm.
	\begin{theorem}\label{1.2}
	 For $0<p<1$, the conditional expectation $\mathbb{E}(\cdot|\eta)$ is bounded on $H^p(\T)$ if and only if $\eta$ is a finite Blaschke product. Furthermore, for a finite Blashcke product $\eta$ with $\eta(0)=0$, we have
	\begin{equation*}
			\|\mathbb{E}(\cdot|\eta)\|_{H^p}=\|\mathbb{E}(\cdot|\eta)\|_{L^p}=\|\eta'\|_{\infty}^{\frac{1}{p}-1}.
		\end{equation*}
	\end{theorem}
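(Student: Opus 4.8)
The plan is to identify $\E(\cdot|\eta)$ with an explicit averaging operator over the fibers of the boundary map $\eta\colon\T\to\T$, and to extract both the boundedness dichotomy and the sharp norm from the elementary inequality $|\sum_k a_k|^p\le\sum_k|a_k|^p$ (valid for $0<p<1$) together with a concentration analysis that detects exactly the quantity $\|\eta'\|_\infty$. First I would assemble the geometric input. Since $\eta$ is inner with $\eta(0)=0$, the boundary map preserves Haar measure, $\eta_*m=m$, so $C_\eta\colon g\mapsto g\circ\eta$ is an isometry of $L^p(\T)$ and of $H^p(\T)$ for every $0<p<1$. As the $\sigma(\eta)$-measurable functions are precisely the functions of $\eta$, the defining averaging property of the conditional expectation together with the Radon--Nikodym theorem yields, for integrable $f$, the fiberwise representation
$$\E(f|\eta)=g\circ\eta,\qquad g(w)=\sum_{z\in\eta^{-1}(w)}\frac{f(z)}{|\eta'(z)|},$$
the weights being forced by $\eta_*m=m$ (and $f=\one$ recovering $\sum_{z\in\eta^{-1}(w)}|\eta'(z)|^{-1}=1$). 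The two identities driving everything are $\int_\T|g\circ\eta|^p\,dm=\int_\T|g|^p\,dm$ and the layer-cake formula $\int_\T\sum_{z\in\eta^{-1}(w)}h(z)\,dm(w)=\int_\T h(z)|\eta'(z)|\,dm(z)$.

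For a finite Blaschke product $\eta$ of degree $n$ the fiber is a.e.\ an $n$-point set, $\eta'$ extends continuously to $\overline{\D}$ with $|\eta'|=z\eta'/\eta>0$ on $\T$, and $\|\eta'\|_\infty<\infty$. Inserting $|\sum_k a_k|^p\le\sum_k|a_k|^p$ into $|g(w)|^p$ and then applying the layer-cake identity gives
$$\|\E(f|\eta)\|_p^p\le\int_\T|f|^p\,|\eta'|^{1-p}\,dm\le\|\eta'\|_\infty^{1-p}\,\|f\|_p^p.$$
Since $\E(\cdot|\eta)$ sends analytic polynomials to polynomials in $\eta$ (via the $H^2$-expansion $\E(f|\eta)=\sum_{k\ge0}\langle f,\eta^k\rangle\eta^k$, a finite sum when $f$ is a polynomial), it extends to a bounded operator with $\|\E(\cdot|\eta)\|_{H^p}\le\|\E(\cdot|\eta)\|_{L^p}\le\|\eta'\|_\infty^{1/p-1}$.

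For the matching lower bound I would concentrate: fix $z_0\in\T$ with $|\eta'(z_0)|=\|\eta'\|_\infty$ and test against $f_\lambda(z)=(1-|\lambda|^2)^{1/p}(1-\bar\lambda z)^{-2/p}$ with $\lambda=rz_0\to z_0$, for which $\|f_\lambda\|_p=1$ and $|f_\lambda|^p$ is an approximate point mass at $z_0$. As $\lambda\to z_0$ only the fiber branch near $z_0$ dominates $g$ in modulus (the point $z_0$ is not critical, as $|\eta'(z_0)|\ge1$), so $|\sum_k a_k|^p\le\sum_k|a_k|^p$ becomes asymptotically sharp while the mass of $|f_\lambda|^p|\eta'|^{1-p}$ localizes where $|\eta'|\approx\|\eta'\|_\infty$; hence $\|\E(f_\lambda|\eta)\|_p^p\to\|\eta'\|_\infty^{1-p}$. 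As $f_\lambda\in H^p\subseteq L^p$, this forces $\|\E(\cdot|\eta)\|_{H^p}=\|\E(\cdot|\eta)\|_{L^p}=\|\eta'\|_\infty^{1/p-1}$, which is the ``furthermore'' assertion.

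It remains to prove necessity, which I expect to be the main obstacle. If $\eta$ is inner but not a finite Blaschke product then it fails to extend analytically across $\T$, equivalently $\eta'\notin H^\infty$, and its Julia--Carath\'eodory angular derivative
$$|\eta'(\zeta)|=\sum_j\frac{1-|a_j|^2}{|\zeta-a_j|^2}+2\int_\T\frac{d\sigma(\xi)}{|\zeta-\xi|^2}$$
(sum over the Blaschke zeros $a_j$, with $\sigma$ the singular measure) is unbounded over $\zeta\in\T$, and can be taken finite but arbitrarily large along a sequence $\zeta\to\T$ approaching an accumulation point of the zeros or of $\mathrm{supp}\,\sigma$. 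Running the one-sided version of the concentration estimate at such a $\zeta$ --- keeping only the single near-$\zeta$ sheet, whose modulus dominates the fiber sum --- produces $\|\E(\cdot|\eta)\|_{H^p}\ge|\eta'(\zeta)|^{1/p-1}\to\infty$. The difficulty is precisely that for a general inner function the fiber $\eta^{-1}(w)$ is countably infinite and the boundary derivative need not exist finitely a.e.\ (for instance when $\eta$ carries a singular factor), so the clean fiberwise formula and the global layer-cake identity are no longer available; the argument must instead be localized to one sheet near a point of finite, large angular derivative, where the Julia--Carath\'eodory theorem supplies the requisite local conformal control, and then combined with the measure-preserving property $\eta_*m=m$ to bound $\|\E(f_\lambda|\eta)\|_p$ from below.
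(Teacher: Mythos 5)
Your treatment of the sufficiency direction and of the norm identity is sound and closely parallels the paper's: the same fiberwise formula $\E(f|\eta)(z)=\sum_{w\in\eta^{-1}(\eta(z))}f(w)/|\eta'(w)|$ (which the paper establishes carefully in Lemma \ref{3.4} and Propositions \ref{3.5}--\ref{3.7}, whereas you assert it from $\eta_*m=m$ and Radon--Nikodym; this derivation should be written out, since the measurability of the fiber sum and the identification with the conditional expectation are exactly the content of those propositions), the same use of $|\sum_k a_k|^p\le\sum_k|a_k|^p$ plus the change-of-variables identity for the upper bound $\|\eta'\|_\infty^{1/p-1}$. Your lower bound via the normalized kernels $f_\lambda(z)=(1-|\lambda|^2)^{1/p}(1-\ol{\lambda}z)^{-2/p}$, whose $p$-th power is the Poisson kernel, is a legitimate alternative to the paper's construction (the paper instead uses outer functions $f_t$ with $|f_t|=t$ on a small arc and $|f_t|=1$ elsewhere); both work because for a finite Blaschke product the $k$ fiber points are separated, so only one branch carries the concentrating mass and no cancellation can occur.

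The genuine gap is the necessity direction, and you have in effect flagged it yourself. Your plan requires (a) a fiber/Clark-measure representation of $\E(\cdot|\eta)$ for a general inner function, which is substantial machinery (Aleksandrov's disintegration theory) that you do not supply, and is delicate precisely because the Clark measures are singular with respect to $m$; and (b) a localization ``to one sheet'' near a boundary point $\zeta$ with large but finite angular derivative. Step (b) is not merely technical: for an infinite Blaschke product or an inner function with singular factor, other fiber points can accumulate at $\zeta$ itself, so within the concentration window of $f_\lambda$ there may be infinitely many large terms $f_\lambda(w)/|\eta'(w)|$ with rotating phases (note $\arg(1-\ol{\lambda}w)^{-2/p}$ sweeps more than a full circle when $p<1$), and a lower bound must rule out destructive cancellation among them. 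You do not do this, and it is not clear it can be done without significant new ideas. The paper avoids all of this with a completely different, soft argument: if $\E(\cdot|\eta)$ is bounded on $H^p(\T)$, then the functional $f\mapsto\widehat{\E(f|\eta)}(k)$ (with $\eta=z^k\xi$, $\xi(0)\neq 0$) is continuous on $H^p(\T)$, hence by the Duren--Romberg--Shields duality theorem it is represented by a function $g$ in the disc algebra $A(\D)$; a direct computation using $\E(z^k|\eta)=\ol{\xi(0)}\,\eta$ identifies $g=\ol{\xi(0)}\,\eta$, so $\eta\in A(\D)$, and an inner function continuous on $\ol{\D}$ is a finite Blaschke product. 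You should either adopt such a duality argument or supply the missing Clark-measure and anti-cancellation analysis; as written, the ``only if'' half of the theorem is not proved.
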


	\noindent The proof of Theorem \ref{1.2} is based on a pointwise representation of conditional expectations. Note that if $\eta$ is a finite Blaschke product with $\eta(0)=0$, then $\|\eta'\|_\infty = 1$ if and only if $\eta=cz$ for some unimodular constant $c$. As a consequence of Theorem \ref{1.1} and \ref{1.2}, there is no nontrivial contractive projection on $H^p(\T)$ satisfying $P\one=\one$ that maps each monomial to a bounded function.

	\begin{theorem}\label{1.3}
	Suppose $0<p<1$. Let $P: H^p(\T)\to H^p(\T)$ be a contractive projection satisfying $P{\bf 1}={\bf 1}$. If $P z^n\in H^\infty(\mathbb{T})$ for each $n\geq 1$, then $$Pf=f(0),\quad f\in H^p(\T),$$ or $P$ is the identity.
  \end{theorem}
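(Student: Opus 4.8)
The plan is to combine Theorem \ref{1.1}, Aleksandrov's theorem, and Theorem \ref{1.2} in a short case analysis. First I would apply Theorem \ref{1.1} with $X=H^p(\T)$ to obtain a complete sub $\sigma$-algebra $\mathbf{\Sigma}'$ of the Borel $\sigma$-algebra on $\T$ such that $Pf=\E(f|\mathbf{\Sigma}')$ for every $f\in\mathcal E=\{f\in H^p(\T): f,Pf\in L^\infty(\T)\}$. The hypothesis enters precisely here: since $Pz^n\in H^\infty(\T)$ and $z^n\in L^\infty(\T)$ for each $n\geq 0$ (with $P\one=\one$ covering $n=0$), every monomial lies in $\mathcal E$; as $\mathcal E$ is a linear subspace, all analytic polynomials lie in $\mathcal E$, and therefore $P$ and $\E(\cdot|\mathbf{\Sigma}')$ agree on the analytic polynomials.

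Next I would verify that $\E(\cdot|\mathbf{\Sigma}')$ leaves $H^1(\T)$ invariant, so that Aleksandrov's theorem applies. For an analytic polynomial $q$ we have $\E(q|\mathbf{\Sigma}')=Pq\in H^\infty(\T)\subset H^1(\T)$; given $f\in H^1(\T)$, choosing polynomials $q_n\to f$ in $H^1$ (hence in $L^1$) and using that $\E(\cdot|\mathbf{\Sigma}')$ is an $L^1$-contraction while $H^1(\T)$ is closed in $L^1(\T)$, we get $\E(f|\mathbf{\Sigma}')\in H^1(\T)$. Aleksandrov's theorem then forces the dichotomy: either $\mathbf{\Sigma}'$ is trivial, or $\mathbf{\Sigma}'$ is generated by an inner function $\eta$ with $\eta(0)=0$.

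In the trivial case, $\E(f|\mathbf{\Sigma}')=\int_\T f\,dm=f(0)$, so $Pq=q(0)$ for all polynomials $q$; since analytic polynomials are dense in $H^p(\T)$ and both $P$ and $f\mapsto f(0)\one$ are $H^p$-continuous (point evaluation at $0$ is a bounded functional on $H^p(\T)$), we conclude $Pf=f(0)$ on $H^p(\T)$. In the remaining case, $\E(\cdot|\eta)$ agrees with the bounded operator $P$ on the dense subspace of polynomials, hence extends boundedly to $H^p(\T)$, and this extension must equal $P$. By Theorem \ref{1.2} this forces $\eta$ to be a finite Blaschke product and gives $\|P\|_{H^p}=\|\eta'\|_\infty^{1/p-1}$. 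Since $P$ is contractive and $1/p-1>0$, we obtain $\|\eta'\|_\infty=1$, whence $\eta=cz$ for some unimodular constant $c$. The $\sigma$-algebra generated by the rotation $z\mapsto cz$ is the full Borel $\sigma$-algebra, so $\E(\cdot|\eta)$ is the identity, and $P$ is the identity on the polynomials, hence on all of $H^p(\T)$.

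I expect the only genuinely delicate point to be the transfer of information across the three topologies in play: the identity $P=\E(\cdot|\mathbf{\Sigma}')$ holds a priori only on $\mathcal E$, Aleksandrov's dichotomy requires $H^1$-invariance, and the final norm computation lives in $H^p$. The observation that makes everything cohere is that the analytic polynomials are simultaneously dense in $H^1(\T)$ and in $H^p(\T)$ and are contained in $\mathcal E$, so the conditional expectation, the contraction $P$, and the norm formula of Theorem \ref{1.2} can all be pinned down on this common dense core. The quantitative input $\|\eta'\|_\infty=1$ forcing $\eta=cz$ is what ultimately collapses the second case to the identity.
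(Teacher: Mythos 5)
Your proposal is correct and follows essentially the same route as the paper's own proof: apply Theorem \ref{1.1} to get $P=\E(\cdot|\mathbf{\Sigma}')$ on a set $\mathcal E$ containing all analytic polynomials, deduce $H^1$-invariance, invoke Aleksandrov's dichotomy, and in the nontrivial case use Theorem \ref{1.2} together with contractivity to force $\|\eta'\|_\infty=1$ and hence $\eta=cz$ (the paper cites Proposition \ref{3.5} for this last implication, which you assert without reference but which is exactly the fact needed). The only difference is that you spell out the approximation/density arguments (identifying $P$ with $\E(\cdot|\mathbf{\Sigma}')$ on all of $H^1$, and passing from polynomials to $H^p$) that the paper leaves implicit.
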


Note that projections induced by {\it idempotent coefficient multipliers} map monomials to bounded functions. In Section 4, we will apply Theorem \ref{1.3} to characterize contractive idempotent coefficient multipliers on $H^p(\T^d)$.

	\begin{theorem}\label{1.4}
	Suppose that $0<p<1$ and $1\leq d\leq \infty$. Then $P_\Gamma$ defined by \eqref{eq1.1} extends to a contraction on $H^p(\T^d)$ if and only if there exists a subset $J \subseteq \{1,2,\ldots ,d\}$ $( J \subseteq \{1,2,\ldots \}$ if $d=\infty)$ , such that $$\Gamma=\{\alpha\in \mathbb N_0^{(d)}: \alpha_j =0\,\, {\rm for}\,\, j\in J\}.$$
	\end{theorem}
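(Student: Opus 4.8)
The plan is to prove the two implications separately, using Theorem \ref{1.1} (to force a subgroup structure on $\Gamma$) together with Theorem \ref{1.3} (to force one–dimensional rigidity along lattice lines); throughout I take $P_\Gamma\neq0$, i.e. $\Gamma\neq\emptyset$. For \emph{sufficiency}, if $\Gamma=\{\alpha:\alpha_j=0,\ j\in J\}$ then on analytic polynomials $P_\Gamma$ is simply the substitution $f\mapsto f|_{z_j=0,\,j\in J}$, i.e. the conditional expectation onto the variables indexed by $J^c:=\{1,\dots,d\}\setminus J$, equivalently $P_\Gamma f(z)=\int_{\T^{J}}f\,dm_{J}$. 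I would deduce contractivity from plurisubharmonicity of $|f|^p$: fixing the coordinates in $J^c$, the sub–mean–value inequality in the $J$–variables gives $|f(z_{J^c},0)|^p\le\int_{\T^{J}}|f|^p\,dm_{J}$ pointwise, and integrating in $z_{J^c}$ yields $\|P_\Gamma f\|_p\le\|f\|_p$; density of polynomials extends this to $H^p(\T^d)$, and a truncation to finitely many variables covers $d=\infty$.

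\emph{A one–variable lemma.} The heart of necessity is the claim that for $0<p<1$ a nonzero idempotent multiplier $P_\Lambda$, $\Lambda\subseteq\mathbb N_0$, is a contraction on $H^p(\T)$ only if $\Lambda=\{0\}$ or $\Lambda=\mathbb N_0$. When $0\in\Lambda$ this is immediate from Theorem \ref{1.3}, since $P_\Lambda\one=\one$ and $P_\Lambda z^n\in\{0,z^n\}\subseteq H^\infty(\T)$. The case $0\notin\Lambda$, $\Lambda\neq\emptyset$ must be ruled out directly: with $k=\min\Lambda$ I would test on $f=(1+z^k)^2$, whose constant term is annihilated while $P_\Lambda f=2z^k+\varepsilon z^{2k}$ with $\varepsilon\in\{0,1\}$, so that $\|P_\Lambda f\|_p=\|2+\varepsilon z\|_p\ge 2$ (by the sub–mean–value inequality its $L^p$–norm is at least the value $2$ at the origin), whereas $\|f\|_p=\|(1+z)^2\|_p<2$. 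This last strict inequality is the one genuinely computational point; it reduces to $2^{p}\,\tfrac2\pi\int_0^{\pi/2}\cos^{2p}u\,du<1$ on $(0,1)$, which I expect to obtain from the log–convexity in $p$ of $p\mapsto\int_0^{\pi/2}\cos^{2p}u\,du$ (the displayed quantity equals $1$ at $p=0,1$ and is strictly convex, hence lies strictly below its chord on $(0,1)$).

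\emph{Slicing and the subgroup structure.} For $v\in\mathbb N_0^{(d)}\setminus\{0\}$ and $\beta\in\mathbb N_0^{(d)}$, the map $g\mapsto z^\beta g(z^v)$ is an isometry of $H^p(\T)$ into $H^p(\T^d)$ (because $z^v$ is equidistributed on $\T$ for $v\neq0$), and $P_\Gamma$ sends it to $z^\beta\,(P_{\Lambda_{v,\beta}}g)(z^v)$ with $\Lambda_{v,\beta}=\{n\ge0:\beta+nv\in\Gamma\}$; hence every $P_{\Lambda_{v,\beta}}$ is a contractive idempotent multiplier on $H^p(\T)$, and the lemma gives $\Lambda_{v,\beta}\in\{\emptyset,\{0\},\mathbb N_0\}$. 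Choosing $\beta=0$, $v=\gamma$ for any $\gamma\in\Gamma$ forces $\Lambda_{\gamma,0}=\mathbb N_0$ (it contains $1$), so $0\in\Gamma$ and $P_\Gamma\one=\one$. Now Theorem \ref{1.1} applies to $X=H^p(\T^d)\subseteq L^p(\T^d)$; since every monomial lies in $\mathcal E$, we get $P_\Gamma z^\alpha=\E(z^\alpha| \mathbf{\Sigma}')$ for all $\alpha$, and the set $H:=\{\alpha\in\mathbb Z^{(d)}:z^\alpha\text{ is }\mathbf{\Sigma}'\text{-measurable}\}$ is a subgroup of $\mathbb Z^{(d)}$ (as $z^\alpha z^\beta=z^{\alpha+\beta}$ and $\overline{z^\alpha}=z^{-\alpha}$) satisfying $\Gamma=H\cap\mathbb N_0^{(d)}$.

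\emph{Conclusion and main obstacle.} Setting $J=\{j:e_j\notin\Gamma\}$ and $T=\{\alpha:\alpha_j=0,\ j\in J\}$, the inclusion $T\subseteq\Gamma$ is immediate, since each $\alpha\in T$ is a nonnegative combination of the $e_k$ ($k\notin J$), all of which lie in $H$, whence $\alpha\in H\cap\mathbb N_0^{(d)}=\Gamma$. For $\Gamma\subseteq T$ I would argue by contradiction: if $\alpha\in\Gamma$ had $\alpha_j\ge1$ with $j\in J$, then, using $\alpha\in H$, the slice $\Lambda_{e_j,\,\alpha-\alpha_je_j}=\{n\ge0:(n-\alpha_j)e_j\in H\}$ is, because $e_j\notin H$, either the singleton $\{\alpha_j\}$ or an arithmetic progression of step $\ge2$, neither of which is $\emptyset$, $\{0\}$, or $\mathbb N_0$—contradicting the slice constraint. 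This yields $\Gamma=T$ and finishes the proof. I expect the main obstacle to be conceptual rather than technical: \emph{neither rigidity statement suffices on its own}. The coordinate–axes set $\{\alpha\in\mathbb N_0^2:\alpha_1\alpha_2=0\}$ passes every one–variable slice test and is excluded only because it is not of the form $H\cap\mathbb N_0^{(d)}$, i.e. only through Theorem \ref{1.1}; conversely the diagonal subgroup $H=\{(n,n)\}$ does have that form and is excluded only by a slice transverse to the diagonal, i.e. through Theorem \ref{1.3}. Arranging the isometric slices in arbitrary lattice directions so that Theorem \ref{1.3} becomes applicable, and feeding its output into the subgroup constraint from Theorem \ref{1.1}, is the crux.
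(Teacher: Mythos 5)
Your proof is correct, and while its overall architecture (a one-variable rigidity lemma, slicing, and Theorem \ref{1.1}) parallels the paper's proof via Theorems \ref{4.1} and \ref{4.2}, two of your key steps take a genuinely different route. First, for the one-variable case with $0\notin\Lambda$: the paper applies Theorem \ref{1.1} to the shifted space $X=\{f\in L^p(\T):z^kf\in H^p(\T)\}$, reduces via Theorem \ref{1.3} to the two cases $\Gamma=\{k\}$ and $\Gamma=\{k,k+1,\ldots\}$, excludes the latter by a measurability argument, and excludes the former by invoking the Brevig--Saksman value $c(1,p)=\sqrt{2/p}\,(1-p/2)^{1/p-1/2}>1$ of the coefficient-functional norm. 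Your single test function $f=(1+z^k)^2$ eliminates every $\Lambda$ with $0\notin\Lambda$ at once, via $\|P_\Lambda f\|_p\ge 2>\|(1+z)^2\|_p$; your reduction of the strict inequality to strict log-convexity of $\phi(p)=2^p\cdot\tfrac{2}{\pi}\int_0^{\pi/2}\cos^{2p}u\,du$ together with $\phi(0)=\phi(1)=1$ is valid (strictness in H\"older holds because $\cos u$ is nonconstant, and one can sanity-check $\phi(1/2)=2\sqrt{2}/\pi<1$). This is more elementary and self-contained than the paper's treatment: it needs neither the external coefficient estimate nor an application of Theorem \ref{1.1} to a non-analytic subspace of $L^p(\T)$. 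Second, in several variables the paper slices only along coordinate directions with monomial offsets $\beta^{[i]}$, shows whole rays $\beta^{[i]}+\mathbb{N}_0e^{(i)}$ lie in $\Gamma$, deduces $0\in\Gamma$, and then needs only that $\Gamma=\{\alpha: z^\alpha\ {\rm is}\ {\bf\Sigma}'\textrm{-measurable}\}$ is a sub-semigroup; with $J=\{j:\alpha_j=0\ \forall\alpha\in\Gamma\}$, the inclusion $\Gamma\subseteq\mathbb{N}_J$ is trivial and $\mathbb{N}_J\subseteq\Gamma$ is the substantive one. You slice in arbitrary lattice directions, upgrade the semigroup to the subgroup statement $\Gamma=H\cap\mathbb{N}_0^{(d)}$, and take $J=\{j:e^{(j)}\notin\Gamma\}$, so that for you $\mathbb{N}_J\subseteq\Gamma$ is trivial and $\Gamma\subseteq\mathbb{N}_J$ is settled by your arithmetic-progression dichotomy. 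These are mirror-image organizations of the same idea, and both are complete; the paper's is marginally leaner (semigroup closure suffices), while your subgroup formulation has the merit of making explicit, as you note, why neither rigidity input suffices on its own. Only one cosmetic fix: when deducing $0\in\Gamma$ you should take $\gamma\in\Gamma\setminus\{0\}$, since the slice construction requires $v\neq 0$ (if $\Gamma=\{0\}$ there is nothing to prove).
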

\noindent This complements a remarkable result of Brevig, Ortega-Cerd\`{a} and Seip \cite{BOS} which characterizes such multipliers on $H^p(\T^d)$ for $1\leq p \leq \infty$ and $1\leq d\leq \infty$. In the case $d=\infty$, Theorem \ref{1.4} also admits a routine transfer to the setting of Dirichlet series. 
	
	\vspace{.2cm}
\noindent{\bf Notations.} In this paper, $(\Omega,  \bf \Sigma, \mu)$ always denotes a complete probability space. The completeness means $\bf \Sigma$ contains all $\mu$-null sets. For a family $\cal G$ of measurable functions on $(\Omega,  \bf \Sigma, \mu)$, $\bf \Sigma(\cal G)$ denotes the smallest complete sub $\sigma$-algebra that makes every function in $\cal G$ measurable. For simplicity, if $\cal G=\{g_1,g_2,\ldots \}$ is countable, we write ${\bf \Sigma}(g_1,g_2,\ldots)$ instead of ${\bf \Sigma}(\{g_1,g_2, \ldots\})$. For a $\bf \Sigma$-measurable function $\eta$, we write $$\E(\cdot | \eta):= \E(\cdot| {\bf \Sigma}(\eta)).$$
		
	\section{An extension theorem for the case $0<p<1$}

	This section is devoted to the proof of Theorem \ref{1.1}. We begin with the following lemma.

	\begin{lemma}\label{2.1}
	Let $0<p<1$. Let $X$ be a closed subspace of $L^p(\mu)$ and $P:X\to X$ be a contractive projection. Suppose $f\in {\rm Ker}\, P$ and $g\in \Ran P$. If $f, g$, and $1/g$ are all in $L^\infty(\mu)$, then we have $$\int_\Omega |g|^{p-2} g \bar{f} d\mu=0.$$
	\end{lemma}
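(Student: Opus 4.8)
The plan is to run a first-order variational argument exploiting that a contractive projection does not increase the $L^p$-quasinorm. Since $g\in\Ran P$ we have $Pg=g$, and since $f\in \mathrm{Ker}\,P$ we have $Pf=0$; because $X$ is a linear subspace, for every $t\in\C$ the element $g+tf$ lies in $X$ and $P(g+tf)=g$. Contractivity of $P$ then yields
\begin{equation*}
\int_\Omega |g|^p\,d\mu=\int_\Omega |P(g+tf)|^p\,d\mu\leq \int_\Omega |g+tf|^p\,d\mu=:\Phi(t),
\end{equation*}
so $\Phi:\C\to\R$ attains a global minimum at $t=0$. The idea is to differentiate $\Phi$ at $t=0$ and read the claimed identity off the vanishing of its derivative.

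The three boundedness hypotheses are precisely what make this differentiation legitimate, and this is where I expect the only real work to lie. Since $g,1/g\in L^\infty(\mu)$ there is a constant $c>0$ with $|g|\geq c$ a.e.; together with $f\in L^\infty(\mu)$, for $|t|\leq\delta$ with $\delta<c/\|f\|_\infty$ we get $|g+tf|\geq c-\delta\|f\|_\infty>0$ a.e. Writing $|g+tf|^p=\big(|g|^2+2\operatorname{Re}(t f\bar g)+|t|^2|f|^2\big)^{p/2}$, the integrand is then smooth in $t$ on this disc, and both it and its partial derivatives in the real and imaginary parts of $t$ are bounded uniformly in $\omega$; the uniform lower bound $|g+tf|\geq c-\delta\|f\|_\infty$ is exactly what controls the negative power $|g+tf|^{p-2}$ (recall $p-2<0$). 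Dominated convergence then permits differentiation under the integral sign. The point of the lemma's hypotheses is that without them $\Phi$ need not be finite or differentiable, so this step would break down.

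To extract the identity I would fix a unimodular $\zeta\in\C$ and consider the real function $r\mapsto\Phi(r\zeta)$, which has a minimum at $r=0$. Differentiating the integrand pointwise at $r=0$ gives
\begin{equation*}
\frac{d}{dr}\Big|_{r=0}|g+r\zeta f|^p=p\,|g|^{p-2}\operatorname{Re}(\zeta f\bar g)=p\,|g|^{p-2}\operatorname{Re}(\bar\zeta\, g\bar f),
\end{equation*}
so the stationarity condition becomes $\int_\Omega |g|^{p-2}\operatorname{Re}(\bar\zeta\, g\bar f)\,d\mu=0$ for every unimodular $\zeta$. Taking $\zeta=1$ kills the real part of $\int_\Omega |g|^{p-2}g\bar f\,d\mu$ and taking $\zeta=i$ kills its imaginary part, whence $\int_\Omega |g|^{p-2}g\bar f\,d\mu=0$, as desired.
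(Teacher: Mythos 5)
Your proof is correct, and it is precisely the variational argument the paper has in mind: the paper omits the proof of Lemma \ref{2.1}, stating only that ``under the assumption that $f$, $g$, and $1/g$ are bounded, Lemma \ref{2.1} can be established by a variational argument,'' which is exactly what you carry out. Your identification of where the boundedness hypotheses enter --- the uniform lower bound on $|g+tf|$ controlling the negative power $|g+tf|^{p-2}$ so that one may differentiate under the integral sign --- is the right justification for the step the paper calls straightforward.
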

	In the case $1\leq p<\infty$, similar integral identities can be proved without any additional assumptions on the boundedness of $f$, $g$, and $1/g$, which provide effective criteria for Birkhoff-James orthogonality in $L^p$ spaces ($1\leq p<\infty$); see \cite{Sha} for details. This criterion fails when $0<p<1$. However, under the assumption that $f$, $g$, and $1/g$ are bounded, Lemma \ref{2.1} can be established by a variational argument. The proof is straightforward and is omitted here.

	We also need the following two elementary observations concerning the $\sigma$-algebra generated by a family of functions. These lemmas may be well known to experts, but we include a proof here for completeness.

	\begin{lemma}\label{2.2}
	Let $\cal G$ be a family of measurable functions on $(\Omega, \bf \Sigma, \mu)$. Denote by $\bf \Sigma(\cal G)$ the $\sigma$-algebra generated by $\cal G$. For any $A\in \bf \Sigma(\cal G)$, there exist countably many functions $\{g_i\}_{i=1}^{\infty}$ in $\cal G$, such that $A\in {\bf \Sigma}(g_1,g_2, \ldots )$.
	\end{lemma}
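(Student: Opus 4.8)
The plan is to prove Lemma~\ref{2.2}, which states that every set $A$ in the generated $\sigma$-algebra $\mathbf{\Sigma}(\mathcal G)$ already lies in a $\sigma$-algebra generated by countably many functions from $\mathcal G$. The natural strategy is a \emph{good sets} argument: let $\mathcal C$ denote the collection of all sets $A \in \mathbf{\Sigma}(\mathcal G)$ for which there exist countably many $g_1, g_2, \ldots \in \mathcal G$ with $A \in \mathbf{\Sigma}(g_1, g_2, \ldots)$. I would show $\mathcal C$ is itself a complete $\sigma$-algebra and that it contains a generating collection for $\mathbf{\Sigma}(\mathcal G)$; minimality of $\mathbf{\Sigma}(\mathcal G)$ then forces $\mathcal C = \mathbf{\Sigma}(\mathcal G)$, which is exactly the claim.

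First I would verify that $\mathcal C$ contains every set of the form $g^{-1}(B)$ for $g \in \mathcal G$ and $B$ a Borel set, since such sets lie in $\mathbf{\Sigma}(g) \subseteq \mathbf{\Sigma}(g)$ (a single function), and together these generate $\mathbf{\Sigma}(\mathcal G)$. I would also include the $\mu$-null sets, which belong to every complete sub $\sigma$-algebra and hence to each $\mathbf{\Sigma}(g)$. Next, the heart of the argument is checking that $\mathcal C$ is closed under the $\sigma$-algebra operations. Closure under complements is immediate, because if $A \in \mathbf{\Sigma}(g_1, g_2, \ldots)$ then $\Omega \setminus A$ lies in the same $\sigma$-algebra. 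The key step is closure under countable unions: given $A_n \in \mathcal C$ for each $n$, with $A_n \in \mathbf{\Sigma}(g_{n,1}, g_{n,2}, \ldots)$, I would take the union of all the countably many generating families over $n$, which is still a countable family $\{h_1, h_2, \ldots\}$ drawn from $\mathcal G$. Then each $A_n \in \mathbf{\Sigma}(h_1, h_2, \ldots)$, so $\bigcup_n A_n \in \mathbf{\Sigma}(h_1, h_2, \ldots)$ as well, whence $\bigcup_n A_n \in \mathcal C$.

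Having shown $\mathcal C$ is a complete sub $\sigma$-algebra of $\mathbf{\Sigma}$ that contains all the generators $g^{-1}(B)$, I would invoke the definition of $\mathbf{\Sigma}(\mathcal G)$ as the \emph{smallest} complete $\sigma$-algebra making every $g \in \mathcal G$ measurable; this gives $\mathbf{\Sigma}(\mathcal G) \subseteq \mathcal C$. Since $\mathcal C \subseteq \mathbf{\Sigma}(\mathcal G)$ by construction, the two coincide, and every $A \in \mathbf{\Sigma}(\mathcal G)$ is therefore in $\mathcal C$, i.e.\ lies in some $\mathbf{\Sigma}(g_1, g_2, \ldots)$ with countably many $g_i \in \mathcal G$.

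The only genuinely delicate point is the countable-union step, where one must observe that a countable union of countable index families is still countable, so that the combined generating family remains admissible; everything else is routine verification of $\sigma$-algebra axioms. A minor bookkeeping subtlety, depending on how strictly one reads the completeness convention, is ensuring that the countably generated $\sigma$-algebras $\mathbf{\Sigma}(g_1, g_2, \ldots)$ are themselves taken to be complete (i.e.\ contain the $\mu$-null sets), which is already built into the paper's notation for $\mathbf{\Sigma}(\mathcal G)$; I would simply note that adjoining the null sets does not disturb countable generation.
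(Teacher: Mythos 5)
Your proof is correct and takes essentially the same approach as the paper: your collection of ``good sets'' is exactly the paper's union $\bigcup \mathbf{\Sigma}(\mathcal{C})$ over countable subfamilies $\mathcal{C} \subseteq \mathcal{G}$, and both arguments rest on the same key points, namely that a countable union of countable generating families is still countable and that minimality of $\mathbf{\Sigma}(\mathcal{G})$ forces it to coincide with this complete $\sigma$-algebra.
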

	\begin{proof} 
	Consider the union
	$$\mathscr F:=\bigcup_{\cal C \subseteq \cal G, \atop {\rm \cal C\,\,is\,\, countable}} \bf \Sigma(\cal C).$$ The result follows once if we show $\mathscr F=\bf \Sigma(\cal G)$. Since $\mathscr F\subseteq \bf\Sigma(\cal G)$, it is enough to show $\mathscr F$ is a complete $\sigma$-algebra. Clearly, $\mathscr F$ is closed under complement and contains all null sets. If $A\in \bf\Sigma(\cal C)$ and $B\in \bf\Sigma(\cal C')$, then $A\cap B \in \bf\Sigma(\cal C\cup \cal C')\subseteq \mathscr F$. Similarly, if $A_n\in {\bf\Sigma}(\cal{C}_n)$ for $n\geq 1$, then $\cup_{n\geq 1} A_n \in {\bf \Sigma}(\cup_{n\geq 1}\cal C_n)$. Thus $\mathscr F$ is also closed under intersection and countable unions. This completes the proof.
	\end{proof}
	
	\begin{lemma}\label{2.3}
		Let $\cal G$ be a subset of $L^\infty$ and $f\in L^1$. If $\E\big(f|{\bf \Sigma}(g_1,\ldots,g_n)\big)=0$ for any finite subset $\{g_1,\ldots,g_n\} \subseteq \cal G$, then
		$$
		\E\big(f|{\bf \Sigma}(\cal G)\big)=0.
		$$
	\end{lemma}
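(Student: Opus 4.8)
The plan is to reduce the statement about the possibly uncountable family $\cal G$ to a countable situation and then to pass to a limit by a martingale convergence argument. Recall first that $\E(f|{\bf \Sigma}')=0$ is equivalent to $\int_A f\,d\mu=0$ for every $A\in {\bf \Sigma}'$, so it suffices to prove that $\int_A f\,d\mu =0$ for every $A\in {\bf \Sigma}(\cal G)$. By Lemma \ref{2.2}, any such $A$ already lies in ${\bf \Sigma}(g_1,g_2,\ldots)$ for some countable subfamily $\{g_i\}_{i\geq 1}\subseteq \cal G$. Hence the whole problem reduces to showing that $\E\big(f|{\bf \Sigma}(g_1,g_2,\ldots)\big)=0$ for an arbitrary countable subfamily, under the hypothesis that $\E\big(f|{\bf \Sigma}(g_1,\ldots,g_n)\big)=0$ for every $n$.

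For a fixed countable subfamily, set ${\bf \Sigma}_n:={\bf \Sigma}(g_1,\ldots,g_n)$. These form an increasing chain of complete sub $\sigma$-algebras whose join, namely the smallest complete $\sigma$-algebra containing $\bigcup_{n\geq 1}{\bf \Sigma}_n$, is exactly ${\bf \Sigma}_\infty:={\bf \Sigma}(g_1,g_2,\ldots)$. Since $f\in L^1$, the sequence $\E(f|{\bf \Sigma}_n)$ is a uniformly integrable martingale with respect to $({\bf \Sigma}_n)_n$, and L\'evy's upward convergence theorem gives $\E(f|{\bf \Sigma}_n)\to \E(f|{\bf \Sigma}_\infty)$ in $L^1$ (and a.e.) as $n\to\infty$. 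By hypothesis each $\{g_1,\ldots,g_n\}$ is a finite subset of $\cal G$, so $\E(f|{\bf \Sigma}_n)=0$ for every $n$; passing to the limit yields $\E(f|{\bf \Sigma}_\infty)=0$. Thus $\int_A f\,d\mu=0$ for every $A\in {\bf \Sigma}_\infty$, and combining this with the reduction of the first paragraph gives $\E\big(f|{\bf \Sigma}(\cal G)\big)=0$.

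I expect the only genuinely delicate point to be the verification that ${\bf \Sigma}(g_1,g_2,\ldots)$ coincides with the completion of $\sigma\big(\bigcup_n {\bf \Sigma}_n\big)$, that is, that enlarging finitely many generators at a time exhausts the joint $\sigma$-algebra up to null sets; this is where the paper's completeness conventions must be used, but it is routine once one observes that $\bigcup_n {\bf \Sigma}_n$ is an algebra making every $g_i$ measurable. As an alternative to the martingale machinery, one can argue directly with Dynkin's $\pi$-$\lambda$ theorem: the algebra $\bigcup_n {\bf \Sigma}_n$ is a $\pi$-system on which $A\mapsto \int_A f\,d\mu$ vanishes, while $\{A:\int_A f\,d\mu=0\}$ is a $\lambda$-system containing $\Omega$ (closure under proper differences and increasing unions follows from linearity of the integral and dominated convergence, using $f\in L^1$), so the two agree on $\sigma\big(\bigcup_n {\bf \Sigma}_n\big)={\bf \Sigma}_\infty$. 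Either route delivers the conclusion, with the $\pi$-$\lambda$ version having the mild advantage of not invoking the almost-everywhere convergence statement.
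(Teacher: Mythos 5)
Your proof is correct and follows essentially the same route as the paper's: reduce to a countable subfamily via Lemma \ref{2.2}, then pass to the limit using upward martingale convergence (cited in the paper as Doob's martingale convergence theorem). The $\pi$-$\lambda$ alternative you sketch and your explicit attention to the identification of ${\bf \Sigma}(g_1,g_2,\ldots)$ with the completion of $\sigma\big(\bigcup_n {\bf \Sigma}_n\big)$ are valid refinements of a point the paper leaves implicit, but they do not change the underlying argument.
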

	\begin{proof}
		For any $A\in{\bf \Sigma}(\cal G)$, Lemma \ref{2.2} ensures that there exist at most countably many functions $\{g_i\}_{i=1}^\infty$ in $\cal G$ such that $A\in{\bf \Sigma}(g_1,g_2,\ldots)$. We deduce from
		$$
		\E(f|{\bf \Sigma}(g_1,\ldots,g_n))=0, \quad n\geq 1,
		$$
		and Doob's martingale convergence theorem that
		$$
		\E(f|{\bf \Sigma}(g_1,g_2,\ldots))=0.
		$$
		This gives $$\int_A f d\mu=0.$$ Since $A\in {\bf \Sigma}(\cal G)$ is arbitrary, we conclude that $\E(f|{\bf \Sigma}(\cal G))=0$.
	\end{proof}

		Now we are ready to prove Theorem \ref{1.1}.
		\begin{proof}[Proof of Theorem \ref{1.1}]
		Let ${\bf \Sigma}'={\bf \Sigma} (P(\mathcal {E}))$. We claim that for each $f\in \mathcal E$ it holds that $$Pf=\mathbb E(f| \bf \Sigma').$$ 
		By Lemma \ref{2.3}, it is enough to prove for any finite subset $\{g_1,\ldots,g_n\}\subseteq P(\cal E)$,
		$$
		\E(f-Pf|{\bf \Sigma}(g_1,\ldots, g_n))=0,\quad f\in \mathcal E.
		$$
		Since $g_k\in L^\infty$ for each $1\leq k\leq n,$ there exists a $\delta>0$ such that ${\bf 1}+\sum_{k=1}^n w_k g_k$ is invertible in $L^\infty$ for any $w=(w_1,\ldots,w_n)$ with $|w_k|\leq \delta,~1\leq k\leq n$. It follows from Lemma \ref{2.1} that
		\begin{align}\label{eq2.1}
			\int_\Omega |1+\sum_{j=1}^n w_j g_j|^{p-2}(1+\sum_{j=1}^n w_j g_j)(\ol{f-Pf}) d\mu=0.
		\end{align}
		 Now we consider the following infinite series expansion
		\begin{eqnarray}
			\big|1+\sum_{j=1}^n w_j g_j\big|^{p-2}(1+\sum_{j=1}^n w_j g_j)
			&=&\big(1+\sum_{j=1}^n w_j g_j\big)^{p/2}\big(1+\sum_{j=1}^n {\bar w_j} \, {\bar g_j}\big)^{p/2-1}\nonumber\\
			&=&\sum_{\alpha\in \bb N_0^n} \sum_{\beta\in \bb N_0^n} \tbinom{p/2}{\alpha}\tbinom{p/2-1}{\beta} g^\alpha\bar{g}^\beta w^\alpha \bar {w}^\beta. \,\label{eq2.2}
		\end{eqnarray}
		Here, $$g:\Omega\rightarrow \C^n; \quad x\mapsto\big(g_1(x),\ldots,g_n(x)\big),$$
        and 
		$$\tbinom{x}{\alpha}= \frac{\Gamma(x+1)}{\Gamma(\alpha_1+1)\cdots \Gamma(\alpha_n+1)\Gamma(x-|\alpha|+1)}, \quad x\notin \mathbb Z,$$ where $|\alpha|:= \alpha_1+ \cdots +\alpha_n$.
		Note that $\delta$ can be chosen sufficiently small to ensure the series \eqref{eq2.2} converges uniformly on $w\in \delta \overline {\mathbb D}^n$ for almost all $x\in \Omega$. Multiplying $\ol{f-Pf}$ in two sides of \eqref{eq2.2} and then integrating with respect to $x\in \Omega$, it follows from Equation \eqref{eq2.1} that
		\begin{align}\label{eq2.3}
			\int_{\Omega} g^\alpha \bar{g}^\beta \,(\ol{f-Pf})d\mu=0, \quad \forall \,\alpha,\beta \in \mathbb N^n_0.
		\end{align}
		Consider the pushforward measure $d\nu:=g_*\Big((\ol{f-Pf})d\mu\Big)$ on $\C^n$ defined for each Borel set $B\subseteq \C^n$ as 
		$$
		\nu(B)=\int_{g^{-1}(B)}(\ol{f-Pf})d\mu.
		$$ 
		Since the functions $g_k$ are all in $L^\infty(\mu)$, the measure $\nu$ is supported on a compact subset $K\subseteq\C^n$. By \eqref{eq2.3} and the change of variables formula for the pushforward measure, we obtain
		\begin{equation}\label{eq2.4}
			\int_{K}z^\alpha\bar{z}^\beta d\nu(z)=0,\quad \forall \,\alpha,\beta \in \mathbb N^n_0.
		\end{equation}
		Since $\{z^\alpha\bar{z}^\beta: \alpha,\beta \in \mathbb N^n_0\}$ spans a dense subsapce of $C(K)$, the identities in \eqref{eq2.4} imply $\nu=0$. This means for any Borel set $B\subseteq\C^n$, 
		\begin{equation}\label{eq2.5}
			\int_{g^{-1}(B)}(\ol{f-Pf})d\mu=\int_B d\nu=0.
		\end{equation}
		Note that any member $A\in {\bf \Sigma}(g_1,\ldots, g_n)$ is exactly of the form $$A= g^{-1}(B) \Delta N$$ where $B$ is a Borel set in $\C^n$ and $N\subseteq \Omega$ is a $\mu$-null set.
		We conclude from \eqref{eq2.5} that
		\begin{equation*}
			\int_A (\ol{f-Pf})d\mu=0,\quad \forall \,A\in{\bf \Sigma}(g_1,\ldots,g_n),
		\end{equation*}
		and hence
		\begin{equation*}
			\E(f-Pf|{\bf \Sigma}(g_1,\ldots,g_n))=0.
		\end{equation*}
		The proof is completed.
	\end{proof}

	\section{conditional expectation operators on $H^p(\mathbb{T}),~0<p<1$.}

As mentioned in the introduction, Theorem \ref{1.1} can be applied to address the problem of when a projection on $H^p(\T)$ with $0<p<1$ can be realized by a conditional expectation. In this section, we consider the following inverse problem and investigate which conditional expectations yield contractive projections on $H^p(\T)$. 

Note that conditional expectations are defined only for integrable functions, rather than for every function in $L^p(\T)$ with $0<p<1$. Therefore, before considering restrictions of conditional expectations to $H^p(\T)$, it may be helpful to first understand how they act on $L^p(\T)$. In particular, the boundedness of conditional expectations on $L^p(\T)$ for $0<p<1$ was fully characterized by N. J. Kalton \cite[Theorem 4.4]{Kal78}. Indeed, for these values of $p$, not every conditional expectation is bounded on $L^p(\T)$, and by And\^o's theorem (Theorem B), the only contractive one is the identity.

However, in order to obtain a contractive projection by restricting some $\E(\cdot|{\bf \Sigma}')$ to $H^p(\T)$ with $0<p<1$, the desired conditional expectation must satisfy the following conditions in a successive manner: 
\begin{itemize}
	\item[$(\mathcal F 1)$:] For any $f\in H^1(\T)$, $\E(f| {\bf \Sigma}') \in H^1(\T)$;
	\item[$(\mathcal F 2)$:] As an operator densely defined on $H^1(\T)$, $\E(\cdot|{\bf \Sigma}')$ is contractive with respect to the $p$-norm. Specifically, $$\|E(f|{\bf \Sigma}')\|_p \leq \|f\|_p,\quad \forall f\in H^1(\T).$$ 
\end{itemize}

The following elegant theorem of A. B. Aleksandrov \cite{Al} completely characterizes the sub $\sigma$-algebras ${\bf \Sigma}'$ that satisfy the condition $(\mathcal{F}1)$ above.

\begin{theorem}[Aleksanderov's Theorem]\label{3.1}
	The conditional expectation $\E(\cdot| {\bf \Sigma}')$ on $L^1(\T)$ leaves $H^1(\T)$ invariant if and only if either ${\bf \Sigma}'$ is generated by the constant function ${\bf 1}$ or ${\bf \Sigma}'$ is generated by an inner function with $\eta(0)=0$.
\end{theorem}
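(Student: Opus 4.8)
The plan is to prove both implications, handling sufficiency by a direct Hilbert-space computation and necessity by reducing to $L^2(\T)$ and dissecting the range of $\E(\cdot\mid\Sigma')$. For sufficiency, the case where $\Sigma'$ is generated by $\one$ is immediate, since then $\E(f\mid\Sigma')=\int_\T f\,dm=f(0)$ is constant, hence in $H^1(\T)$. Suppose instead $\Sigma'=\Sigma(\eta)$ for an inner $\eta$ with $\eta(0)=0$. I would first record the standard fact that $\eta_\ast m=m$ (check $\int_\T\Phi\circ\eta\,dm=\int_\T\Phi\,dm$ on the monomials $z^n$, using $\int_\T\eta^n\,dm=\eta^n(0)$), which makes $\{\eta^n\}_{n\in\mathbb Z}$ an orthonormal basis of $L^2(\Sigma(\eta))$, so that for $f\in H^2(\T)$
\[
\E(f\mid\eta)=\sum_{n\in\mathbb Z}\langle f,\eta^n\rangle\,\eta^n.
\]
The decisive point is that the negative-index coefficients vanish: for $n\ge1$ one has $\langle f,\eta^{-n}\rangle=\int_\T f\,\eta^{n}\,dm=(f\eta^n)(0)=f(0)\eta(0)^n=0$ because $f\eta^n\in H^1(\T)$. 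Hence $\E(f\mid\eta)=\sum_{n\ge0}\langle f,\eta^n\rangle\eta^n\in H^2(\T)$, and since $H^2(\T)$ is dense in $H^1(\T)$, the operator $\E(\cdot\mid\eta)$ is $L^1$-contractive, and $H^1(\T)$ is closed in $L^1(\T)$, the invariance of $H^1(\T)$ follows.

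For necessity, assume $\E:=\E(\cdot\mid\Sigma')$ leaves $H^1(\T)$ invariant. First I would pass to $L^2$: if $f\in H^2(\T)$ then $\E f\in H^1(\T)\cap L^2(\T)=H^2(\T)$, so the orthogonal projection $Q:=\E|_{L^2}$ onto $L^2(\Sigma')$ preserves $H^2(\T)$; being self-adjoint it also preserves $(H^2)^\perp=\overline{H^2_0}$. Writing $M_+:=L^2(\Sigma')\cap H^2$ and $M_+^0:=\{f\in M_+:f(0)=0\}$, conjugation-invariance of $L^2(\Sigma')$ yields the orthogonal decomposition $L^2(\Sigma')=\C\one\oplus M_+^0\oplus\overline{M_+^0}$. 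I would then introduce the analytic subalgebra $\mathcal A:=H^\infty(\T)\cap L^\infty(\Sigma')$, a weak-$\ast$ closed unital subalgebra of $H^\infty(\T)$ with $\mathcal A\cap\overline{\mathcal A}=\C$. Since $\E$ is also $L^\infty$-contractive, it preserves $H^\infty(\T)$, so $\E(H^\infty(\T))=\mathcal A$ is $H^2$-dense in $M_+$; combined with the displayed decomposition this gives $\Sigma'=\Sigma(\mathcal A)$, and the whole problem reduces to understanding $\mathcal A$.

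The crux — and the step I expect to be the main obstacle — is to show that either $\mathcal A=\C$ (forcing $\Sigma'$ trivial) or $\mathcal A=\{h\circ\eta:h\in H^\infty(\T)\}$ for a single inner $\eta$ with $\eta(0)=0$ (forcing $\Sigma'=\Sigma(\eta)$). The natural target is to produce an inner $\eta\in M_+^0$ with $M_+^0=\eta\,M_+$, since iterating then yields $M_+=\overline{\mathrm{span}}\{\eta^n:n\ge0\}$ and hence $\mathcal A=\{h\circ\eta:h\in H^\infty(\T)\}$. Here $M_+$ is a closed subspace of $H^2$ containing $\one$ and invariant under multiplication by $\mathcal A$, and the generator should be extracted by a Beurling-type argument on the $\mathcal A$-invariant subspace $M_+^0$, together with the module identity $Q(a\bar b)=a\,Q(\bar b)$ for $a,b\in\mathcal A$ (valid because $a$ is $\Sigma'$-measurable), which encodes the multiplicativity that forces the generator to be inner rather than merely outer. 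The genuine difficulty is that $M_+^0$ is a priori invariant only under $\mathcal A$, not under the full shift $M_z$, so Beurling's theorem does not apply directly; the essential extra input is that $\Sigma'$ is a bona fide $\sigma$-algebra, so its measure algebra is singly generated, and it is this feature that upgrades ``invariant under $\mathcal A$'' to ``singly generated by an inner function.'' Once $\eta$ is produced and shown to be inner with $\eta(0)=0$, the chain $\Sigma'=\Sigma(\mathcal A)=\Sigma(\eta)$ closes the argument.
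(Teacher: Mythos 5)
Your sufficiency argument is correct and is essentially the paper's own proof of that half: the paper, too, only proves the ``if'' direction, via the orthonormal basis $\{\eta^k\}_{k\in\mathbb Z}$ of $L^2\big(\T,{\bf \Sigma}(\eta),dm\big)$, the vanishing of the negative-index coefficients (forced by $\eta(0)=0$), and an approximation from $H^2(\T)$ to $H^1(\T)$; your write-up fills in the same details.

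The necessity direction, however, contains a genuine gap, and it is exactly the step you flag as ``the crux.'' Everything up to your reduction is sound but is essentially a change of language: passing to $Q=\E(\cdot|{\bf \Sigma}')|_{L^2}$, the decomposition $L^2({\bf \Sigma}')=\C\one\oplus M_+^0\oplus\overline{M_+^0}$, and ${\bf \Sigma}'={\bf \Sigma}(\mathcal A)$ with $\mathcal A=H^\infty(\T)\cap L^\infty({\bf \Sigma}')$ are all fine, but the remaining claim --- that $\mathcal A=\C$ or $\mathcal A=\{h\circ\eta:h\in H^\infty(\T)\}$ for a single inner $\eta$ with $\eta(0)=0$ --- is equivalent to the theorem itself, and you do not prove it. The bridging idea you offer does not close the gap: that the measure algebra of ${\bf \Sigma}'$ is singly generated is automatic (any sub-$\sigma$-algebra of the Lebesgue $\sigma$-algebra of $\T$ is, mod null sets, generated by one bounded measurable function, e.g.\ $\sum_n 3^{-n}\chi_{A_n}$ for a generating sequence $(A_n)$) and carries no analytic content whatsoever; it cannot by itself upgrade ``$M_+^0$ is $\mathcal A$-invariant'' to ``generated by an \emph{inner} function,'' which is precisely what must be shown. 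Likewise, as you yourself note, a Beurling-type factorization requires invariance under multiplication by $z$, which $M_+^0$ need not enjoy, and you supply no substitute for it. Be aware also that the paper does not prove this direction either: Theorem 3.1 is quoted from Aleksandrov \cite{Al}, whose argument analyzes the measurable partition of $\T$ induced by ${\bf \Sigma}'$ and is genuinely nontrivial. So what you have is a complete proof of the easy half together with a plausible framework --- but not a proof --- for the hard half.
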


\noindent The `if' part of Aleksanderov's theorem is easy. Indeed, if $\eta$ is an inner function vanishing at the origin, then $\{\eta^k: k\in \mathbb Z\}$ forms an orthonormal basis of $L^2\big(\T,{\bf \Sigma}(\eta),dm\big)$. Since $\E(\cdot| \eta)$ is the orthogonal projection from $L^2(\T,dm)$ onto $L^2\big(\T, {\bf \Sigma}(\eta),dm\big)$, we have
$$\E(f| \eta)=\sum_{k\in \mathbb Z} \langle f, \eta^k \rangle \,\eta^k$$
for any $f\in L^2(\T,dm)$, and hence $\E(\cdot| \eta)$ leaves $H^2(\T)$ invariant. An approximation argument shows that $\E(\cdot| \eta)$ leaves $H^1(\T)$ invariant.

In view of Aleksandrov's theorem, we only need to consider for which inner functions $\eta$ with $\eta(0)=0$, the $\sigma$-algebra generated by $\eta$ satisfies condition $(\mathcal{F}2)$ above. In what follows, we will show, as stated in Theorem \ref{1.2}, that when $0<p<1$, the conditional expectation $\mathbb{E}(\cdot|\eta)$ is bounded on $H^p$ if and only if $\eta$ is a finite Blaschke product. Furthermore, if $\eta$ is a finite Blaschke product, then $$\|\mathbb{E}(\cdot|\eta)\|_{H^p} = \sup_{w\in \T} |\eta'(w)|^{\frac{1}{p}-1}.$$ 

The next lemma is a part of the Duren-Romberg-Shields theorem \cite[Theorem 1]{DRS} characterizing the dual space of $H^p(\T)$ for $0<p<1$. 

\begin{lemma}\label{3.2}
Let $0<p<1$ and let $\lambda$ be a continuous linear functional on $H^p(\T)$. Then there exists a unique function $g\in A(\D)$ such that
\begin{align*}
\lambda(f)= \lim_{r\to 1} \frac{1}{2\pi}\int_0^{2\pi} f(re^{i\theta}) \ol{g(e^{i\theta})} d\theta, \quad f\in H^p(\T).
\end{align*}
Here, $A(\D)$ is the classical disc algebra, consisting of all continuous functions on $\ol{\D}$ that are analytic in $\D$.
\end{lemma}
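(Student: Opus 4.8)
The plan is to establish uniqueness and existence of the representing function $g$ separately, carrying out the existence part through the Cauchy (Szeg\H{o}) kernel $k_w(z):=(1-\ol w z)^{-1}$, $w\in\ol\D$. Throughout I will use freely that the analytic polynomials are dense in $H^p(\T)$ and that the dilations $f_r(z):=f(rz)$ satisfy $\|f_r-f\|_p\to 0$ as $r\to 1^-$. Uniqueness is the easy half: if $g\in A(\D)$ represents $\lambda$, then testing the formula against $f=z^n$ for $n\ge 0$ and computing the elementary integral $\frac1{2\pi}\int_0^{2\pi}e^{in\theta}\,\ol{g(e^{i\theta})}\,d\theta=\ol{\widehat g(n)}$ shows $\ol{\widehat g(n)}=\lambda(z^n)$, so the Taylor coefficients of $g$ are completely determined by $\lambda$. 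Since a function in $A(\D)\subseteq H^1(\T)$ is determined by its nonnegative Fourier coefficients, $g$ is unique.

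For existence I would \emph{define} the candidate by $g(w):=\ol{\lambda(k_w)}$ for $w\in\ol\D$. The first observation is that $k_w\in H^p(\T)$ for \emph{every} $w\in\ol\D$, including the boundary: indeed $\|k_w\|_p^p=\frac1{2\pi}\int_0^{2\pi}|1-\ol w\,e^{i\theta}|^{-p}\,d\theta$, and when $|w|=1$ the integrand behaves like $|\theta-\arg w|^{-p}$, which is integrable \emph{precisely because} $0<p<1$; the same computation gives the uniform bound $\sup_{w\in\ol\D}\|k_w\|_p<\infty$. The heart of the argument is to upgrade this to the continuity of the map $w\mapsto k_w$ from $\ol\D$ into $H^p(\T)$, which I expect to obtain by a (generalized) dominated convergence argument using, near the boundary, the uniform integrable majorant $|\theta-\arg w|^{-p}$. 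Granting this, the function $w\mapsto g(w)=\ol{\lambda(k_w)}$ is continuous on $\ol\D$, being a composition of the continuous map $w\mapsto k_w$, the continuous functional $\lambda$, and complex conjugation. For $|w|<1$ the geometric expansion $k_w=\sum_{n\ge 0}\ol w^n z^n$ converges uniformly on $\T$, hence in $H^p(\T)$, so linearity and continuity of $\lambda$ give $g(w)=\sum_{n\ge 0}w^n\,\ol{\lambda(z^n)}$; thus $g$ is analytic on $\D$ with $\widehat g(n)=\ol{\lambda(z^n)}$ for $n\ge 0$ and $\widehat g(n)=0$ for $n<0$. Being analytic on $\D$ and continuous on $\ol\D$, $g$ lies in $A(\D)$.

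The representation formula then drops out cleanly. Fix $f\in H^p(\T)$ and $0<r<1$. Since $f_r$ is analytic on a disc of radius $1/r>1$, its Taylor series converges uniformly on $\ol\D$, hence in $H^p(\T)$, so I may pass $\lambda$ through the sum to get
$$\lambda(f_r)=\sum_{n\ge 0}\widehat f(n)\,r^n\,\lambda(z^n)=\sum_{n\ge 0}\widehat f(n)\,r^n\,\ol{\widehat g(n)}=\frac1{2\pi}\int_0^{2\pi}f(re^{i\theta})\,\ol{g(e^{i\theta})}\,d\theta.$$
Letting $r\to 1^-$ and using $\|f_r-f\|_p\to 0$ together with the continuity of $\lambda$, the left-hand side tends to $\lambda(f)$; hence the limit on the right exists and equals $\lambda(f)$, which is exactly the asserted identity.

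The main obstacle is the boundary continuity of $w\mapsto k_w$ in the $H^p$ quasinorm (equivalently, the membership $g\in A(\D)$ rather than merely $g\in H^\infty(\D)$). This is the one place where the hypothesis $0<p<1$ enters essentially: for $p\ge 1$ the Cauchy kernel $k_w$ fails to lie in $H^p$ when $|w|=1$, which is precisely the analytic reason the dual space changes character at $p=1$.
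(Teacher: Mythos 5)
The paper never proves this lemma: it is quoted verbatim as ``a part of'' Theorem 1 of Duren--Romberg--Shields \cite{DRS}, so your self-contained argument is necessarily a different route, and it is correct in outline. You prove exactly the weaker statement the paper needs ($g\in A(\D)$) rather than the sharper DRS conclusion that $g$ lies in the Lipschitz class $\Lambda_{1/p-1}$; your structure --- uniqueness by testing on monomials, existence by setting $g(w)=\ol{\lambda(k_w)}$, analyticity of $g$ from the geometric series $k_w=\sum_{n\ge 0}\ol{w}^n z^n$, then the representation identity for the dilations $f_r$ followed by $\|f_r-f\|_p\to 0$ --- is sound, and your observation that $k_w\in H^p(\T)$ for $|w|=1$ precisely when $p<1$ is indeed the crux that makes the disc-algebra conclusion available at these exponents. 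What each approach buys: the citation imports the full DRS duality (smoothness of $g$ and the converse characterization), none of which the paper uses; your argument is elementary and confined to what is actually needed in the proof of Theorem 3.3, namely that $g$ is continuous on $\ol{\D}$ and determined by the values $\lambda(z^n)$.

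Two details in your existence step need tightening, though neither is fatal. First, the computation $\int_0^{2\pi}|1-\ol{w}e^{i\theta}|^{-p}\,d\theta<\infty$ only places the boundary function of $k_w$ in $L^p(\T)$; membership in $H^p(\T)$, defined as the closure of the analytic polynomials, requires an extra word --- e.g.\ approximate $k_w$ by $k_{rw}$ with $r<1$, each of which is a uniform limit of its Taylor polynomials, and let $r\to 1$ using the same convergence argument. Second, and more importantly, your proposed ``uniform integrable majorant $|\theta-\arg w|^{-p}$'' for the continuity of $w\mapsto k_w$ is not literally correct: the singularity of $|k_{w_n}|^p$ sits at $\arg w_n$, which moves with $n$, so no single function of that form dominates the whole sequence. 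The repair is easy. One option is Pratt's generalized dominated convergence theorem with majorants $|k_{w_n}|^p+|k_w|^p$: their integrals converge to $2\|k_w\|_p^p$ because, by rotation invariance, $\|k_w\|_p$ depends only on $|w|$ and is continuous in $|w|$ (here a genuinely fixed majorant $C|1-e^{i\theta}|^{-p}$ works). Alternatively, use the identity
\begin{equation*}
k_{w_1}-k_{w_2}=(\ol{w_1}-\ol{w_2})\,z\,k_{w_1}k_{w_2},
\end{equation*}
and estimate the resulting integral directly; this yields $\|k_{w_1}-k_{w_2}\|_p^p\lesssim |w_1-w_2|^{1-p}$, i.e.\ H\"older continuity of $w\mapsto k_w$ on $\ol{\D}$. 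With either repair, your proof is complete.
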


\begin{theorem}\label{3.3}
	Let $0<p<1$ and let $\eta$ be an inner function with $\eta(0)=0$. If $\mathbb{E}(\cdot|\eta)$ is bounded on $H^p(\mathbb{T})$, then $\eta$ is a finite Blaschke product.
\end{theorem}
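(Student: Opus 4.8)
The plan is to deduce a contradiction from Lemma \ref{3.2} (the Duren--Romberg--Shields duality) under the assumption that $\E(\cdot|\eta)$ is bounded while $\eta$ is \emph{not} a finite Blaschke product. The first step records the equivalence I intend to exploit: an inner function is a finite Blaschke product if and only if it extends continuously to $\ol{\D}$, i.e. $\eta\in A(\D)$. Indeed, continuity up to $\ol\D$ forces the singular inner factor to vanish and the zero set to be finite, since a Blaschke product is discontinuous at every accumulation point of its zeros on $\T$. Thus it suffices to exhibit a bounded linear functional on $H^p(\T)$ whose unique $A(\D)$-representative, furnished by Lemma \ref{3.2}, cannot lie in $A(\D)$ unless $\eta\in A(\D)$.

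Next I fix a point $z_0\in\D$ with $\eta(z_0)\neq 0$ (which exists since $\eta$ is inner, hence not identically zero) and consider the functional $\lambda(f):=\E(f|\eta)(z_0)$. Assuming $\E(\cdot|\eta)$ is bounded on $H^p(\T)$, $\lambda$ is the composition of this bounded operator with evaluation at $z_0$, and the latter is a bounded functional on $H^p(\T)$; hence $\lambda$ is continuous and Lemma \ref{3.2} supplies a unique $g\in A(\D)$ representing it. The decisive computation is to identify $g$ in closed form. Using the $L^2$-expansion $\E(f|\eta)=\sum_{k\geq 0}\la f,\eta^k\ra\,\eta^k$, valid for analytic $f$ (the negative-index terms vanish because $\int_\T f\eta^{|k|}\,dm=0$ for $f\in H^2$ when $\eta(0)=0$), I obtain, for every analytic polynomial $f$,
$$\lambda(f)=\sum_{k\geq 0}\la f,\eta^k\ra\,\eta(z_0)^k=\la f,\tilde g_{z_0}\ra,\qquad \tilde g_{z_0}:=\frac{1}{1-\ol{\eta(z_0)}\,\eta}=\sum_{k\geq 0}\ol{\eta(z_0)}^{\,k}\eta^k\in H^\infty,$$
the series converging in $L^\infty(\T)$ because $|\ol{\eta(z_0)}\,\eta|\leq|\eta(z_0)|<1$. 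Comparing Taylor coefficients of $g$ and $\tilde g_{z_0}$ by testing both pairings against the monomials $z^n$ (whose linear span is dense in $H^p(\T)$) forces $g=\tilde g_{z_0}$; in particular $\tilde g_{z_0}\in A(\D)$.

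Finally I will convert $\tilde g_{z_0}\in A(\D)$ back into information about $\eta$. Since $|1-\ol{\eta(z_0)}\,\eta|\geq 1-|\eta(z_0)|>0$ on $\ol{\D}$, the function $\tilde g_{z_0}$ is bounded away from $0$, so $1/\tilde g_{z_0}$ is continuous on $\ol{\D}$, whence
$$\eta=\frac{1}{\ol{\eta(z_0)}}\Big(1-\frac{1}{\tilde g_{z_0}}\Big)$$
extends continuously to $\ol{\D}$. Thus $\eta\in A(\D)$, and being inner it must be a finite Blaschke product, contradicting our assumption.

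I expect the main obstacle to be the middle step: pinning the representative $g$ down \emph{exactly} as $\tilde g_{z_0}$, which requires justifying the interchange of the summation with the pairing/limit in Lemma \ref{3.2}. Restricting the identity $\lambda(f)=\la f,\tilde g_{z_0}\ra$ to analytic polynomials—where $f_r\to f$ uniformly and only finitely many inner products $\la f,\eta^k\ra$ survive—keeps this elementary, after which uniqueness in Lemma \ref{3.2} together with density finishes the identification. The remaining ingredients, namely the boundedness of point evaluations on $H^p(\T)$, the $L^2$-orthogonality expansion of $\E(\cdot|\eta)$ recalled after Theorem \ref{3.1}, and the characterization of finite Blaschke products as the continuous inner functions, are standard.
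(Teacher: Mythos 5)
Your proof is correct and is, at its core, the same argument as the paper's: compose the (assumed bounded) operator $\E(\cdot|\eta)$ with a bounded linear functional on $H^p(\T)$, apply the Duren--Romberg--Shields duality (Lemma \ref{3.2}) to obtain a representative $g\in A(\D)$, identify $g$ explicitly through the expansion $\E(f|\eta)=\sum_{k\ge 0}\la f,\eta^k\ra\,\eta^k$, and conclude that $\eta\in A(\D)$, hence a finite Blaschke product. The only real difference is the choice of functional: the paper writes $\eta=z^k\xi$ with $\xi(0)\neq 0$ and composes with the $k$-th Taylor-coefficient functional $\tau_k$, which makes the representative equal to $\ol{\xi(0)}\,\eta$ itself, so continuity of $\eta$ drops out with no further work; you compose with evaluation at a point $z_0\in\D$ with $\eta(z_0)\neq 0$, which produces the kernel $\tilde g_{z_0}=1/(1-\ol{\eta(z_0)}\,\eta)$ and then needs the extra (harmless) inversion step to recover $\eta\in A(\D)$. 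Both work; the paper's choice is marginally shorter, while yours avoids the factorization $\eta=z^k\xi$ and the computation $\E(z^k|\eta)=\ol{\xi(0)}\,\eta$, and your reduction to polynomial test functions (finitely many nonzero $\la f,\eta^k\ra$) correctly disposes of the convergence issues you anticipated. One cosmetic slip: to see that $\tilde g_{z_0}$ is bounded away from zero on $\ol{\D}$ you should invoke the upper bound $|1-\ol{\eta(z_0)}\,\eta|\le 1+|\eta(z_0)|<2$, which gives $|\tilde g_{z_0}|>1/2$; the lower bound $|1-\ol{\eta(z_0)}\,\eta|\ge 1-|\eta(z_0)|$ that you quote shows instead that $\tilde g_{z_0}$ is bounded above (i.e.\ well defined). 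This does not affect the validity of the argument.
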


\begin{proof}
	Write $\eta(z)=z^k \xi(z)$ where $\xi$ is an inner function and $\xi(0)\neq 0$. Denote by $\tau_k$ the functional
    $$\tau_k: f\mapsto\widehat{f}(k ),$$ where $\widehat{f}(k)$ is the $k$-th Taylor coefficient of $f$. Because both $\tau_k $ and $\E(\cdot| \eta)$ are bounded on $H^p(\T)$, we see that $$\lambda:=\tau_k\circ \mathbb{E}(\cdot|\eta): f \mapsto \lim_{r\to 1}\frac{1}{2\pi}\int_0^{2\pi} \E(f|\eta)(re^{i\theta})\cdot \ol{e^{i k \theta}} d\theta$$ defines a continuous linear functional on $H^p(\mathbb{T})$. By Lemma \ref{3.2}, there exists a unique function $g\in A(\D)$ such that $$\lambda(f)=\lim_{r\to 1}\, \frac{1}{2\pi}\int_0^{2\pi} f(re^{i\theta}) \ol{g(e^{i\theta})} d\theta,\quad f\in H^p(\T).$$ In particular, for any analytic polynomial $q$, it holds that  $$\lambda(q)=\int_\T q(w) \ol{g(w)} dm(w).$$
	Since $\E(q| \eta)\in H^\infty(\T)$ for any analytic polynomial $q$ and $\E(\cdot| \eta)$ is self-adjoint as an operator on $L^2(\T,dm)$, we have
	\begin{align*}
		\lambda(q)&=\int_\T \E(q|\eta)(w)\cdot \ol{w}^k dm(w)\\
		&=\int_\T q(w)\cdot  \ol{\E(z^k| \eta)(w)} \,dm(w)\\
		&=\int_\T q(w)\cdot  \ol{\bigg(\sum_{l \geq 0} \langle z^k , \eta^l \rangle \eta^l(w) \bigg)}dm(w).\\	
		&=\int_\T q(w) \cdot \xi(0) \ol{\eta(w)} \,dm(w)
	\end{align*}	
	As a result, the identity $$\int_\T q(w) \ol{g(w)} dm(w)=\int_\T q(w) \cdot \xi(0) \ol{\eta(w)} dm(w)$$ holds for any analytic polynomial $q$. We conclude that $g=\ol{\xi(0)} \eta$. This shows $\eta$ is an inner function that lies in $A(\mathbb{D})$, and hence it must be a finite Blaschke product.
\end{proof}

Next, we consider the converse of Theorem \ref{3.3}. We first recall some basic properties for finite Blaschke products. Let $\eta(z)=\prod_{j=1}^k\frac{z-a_j}{1-\overline{a_j}z}$ be a finite Blaschke product where $a_j\in\mathbb{D},~1\leq j\leq k$. A direct calculation yields
\begin{equation*}
	\sum_{j=1}^k\frac{1-|a_j|}{1+|a_j|}\leq |\eta'(z)|\leq \sum_{j=1}^k\frac{1+|a_j|}{1-|a_j|},\quad z\in\mathbb{T}.
\end{equation*}
This shows that $\eta$ is a locally univalent function on $\T$. 
Actually, $\eta$ is a covering map from $\T$ onto itself with degree $k$. To be more specific, for each $z\in \T$ the preimage $\eta^{-1}(z)$ consists of exactly $k$ distinct points on $\T$. And if we write $\eta^{-1}(z)=\{w_m: 1\leq m \leq k\}$ with $${\rm Arg}\, w_1 < {\rm Arg}\,  w_2<\cdots < {\rm Arg}\, w_k < 2\pi+{\rm Arg}\, w_1,$$ then $\eta$ maps each arc $[w_m, w_{m+1})$ to $\T$ bijectively ($w_{k+1}:=w_1$).

The following identity plays a crucial role in the remainder of the proof.

\begin{lemma}\label{3.4}
	Let $\eta$ be a finite Blaschke product. Then for any $f\in L^1(\mathbb{T})$,
	\begin{equation*}
		\int_{\mathbb{T} }f(z)|\eta'(z)| dm(z)=\int_{\mathbb{T}} \bigg(\sum_{w\in \eta^{-1}(z)}f(w) \bigg)dm(z).
	\end{equation*}
\end{lemma}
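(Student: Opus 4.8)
The plan is to reduce the identity to an elementary one-dimensional change of variables, exploiting the covering-map structure of $\eta$ on $\T$ recalled above. Writing $z=e^{i\theta}$ and $\eta(e^{i\theta})=e^{i\psi(\theta)}$, the first point I would establish is that $\psi$ is a strictly increasing $C^1$ function with
$$\psi'(\theta)=|\eta'(e^{i\theta})|.$$
This follows by differentiating $e^{i\psi(\theta)}=\eta(e^{i\theta})$, which gives $i\psi'(\theta)e^{i\psi(\theta)}=ie^{i\theta}\eta'(e^{i\theta})$, whence $\psi'(\theta)=e^{i\theta}\eta'(e^{i\theta})/\eta(e^{i\theta})$. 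The quantity $z\eta'(z)/\eta(z)$ is positive on $\T$, as one checks factor by factor using $\sum_j(1-|a_j|^2)/|z-a_j|^2$ (which also reproduces the two-sided bound on $|\eta'|$ stated above); since $|e^{i\theta}|=|\eta(e^{i\theta})|=1$, its value equals $|\eta'(e^{i\theta})|>0$, consistent with the local univalence and orientation-preserving covering property already noted.

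Next I would use the decomposition of $\T$ into the $k$ arcs $I_m=[w_m,w_{m+1})$, $1\le m\le k$, on each of which $\eta$ restricts to a bijection onto $\T$. On the arc $I_m$, parametrized by $\theta\in[\theta_m,\theta_{m+1})$, the map $\phi=\psi(\theta)$ sweeps out a full period of length $2\pi$, so the substitution $\phi=\psi(\theta)$, $d\phi=|\eta'(e^{i\theta})|\,d\theta$, yields
$$\frac{1}{2\pi}\int_{\theta_m}^{\theta_{m+1}}f(e^{i\theta})\,|\eta'(e^{i\theta})|\,d\theta=\frac{1}{2\pi}\int_0^{2\pi}f\big((\eta|_{I_m})^{-1}(e^{i\phi})\big)\,d\phi,$$
where $(\eta|_{I_m})^{-1}$ denotes the continuous inverse branch. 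Summing over $m=1,\dots,k$ and recalling that the branch values $\{(\eta|_{I_m})^{-1}(z):1\le m\le k\}$ enumerate precisely the $k$ points of $\eta^{-1}(z)$, the left-hand side becomes $\int_\T f(z)|\eta'(z)|\,dm(z)$ while the right-hand side becomes $\int_\T\big(\sum_{w\in\eta^{-1}(z)}f(w)\big)\,dm(z)$, which is the claimed identity.

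Finally, I would address the regularity issue: the substitution is most transparent for continuous $f$, and to reach a general $f\in L^1(\T)$ I would either invoke the Lebesgue change-of-variables theorem for the $C^1$ diffeomorphisms $\eta|_{I_m}$ directly, or argue by the density of $C(\T)$ in $L^1(\T)$ together with the fact that both sides are continuous linear functionals of $f$ (the left side since $|\eta'|\in L^\infty(\T)$, and the right side since, by the identity just proved on a dense set, it agrees with the left side, so the inner sum defines a function in $L^1(\T)$). The step requiring the most care is the first one, namely the positivity of $z\eta'(z)/\eta(z)$ on $\T$ that identifies the angular derivative with $|\eta'|$; once that is in hand, the measurability of the inverse branches, needed to make sense of $z\mapsto\sum_{w\in\eta^{-1}(z)}f(w)$ as an $L^1$ function, is immediate from their continuity.
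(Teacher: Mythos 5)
Your proof is correct, and it takes a more explicit (and in one respect more self-contained) route than the paper's. The paper argues by approximation: it reduces to $f=\chi_I$ for an arc $I$ small enough that $\eta$ is injective on $I$, notes that $\sum_{w\in\eta^{-1}(z)}\chi_I(w)=\chi_{\eta(I)}(z)$, and then concludes with the arc-length identity $m(\eta(I))=\int_I|\eta'|\,dm$; the change of variables is hidden entirely in that last equality. You instead work globally with the $k$ maximal branch arcs $I_m$, prove the pointwise identity
\begin{equation*}
\psi'(\theta)=\frac{z\eta'(z)}{\eta(z)}=\sum_{j=1}^k\frac{1-|a_j|^2}{|z-a_j|^2}=|\eta'(z)|>0,\qquad z=e^{i\theta}\in\T,
\end{equation*}
and carry out the substitution branch by branch. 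What your version buys: it makes the orientation-preserving covering structure and the identification of the angular derivative with $|\eta'|$ explicit; it treats general $f\in L^1(\T)$ directly through the change-of-variables theorem for the bi-Lipschitz branches $\eta|_{I_m}$, so that measurability and integrability of $z\mapsto\sum_{w\in\eta^{-1}(z)}f(w)$ come for free (the paper leaves the passage from indicators of small arcs to general $f$ as an unspecified approximation argument, and handles the related measurability question separately in Proposition~\ref{3.6}); and it yields the quantitative bound $\int_\T\big|\sum_{w\in\eta^{-1}(z)}f(w)\big|\,dm(z)\leq\|\eta'\|_\infty\|f\|_1$ as a byproduct. The paper's version is shorter because it outsources all the analysis to the elementary arc-length fact.

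Two small caveats on your write-up. First, your fallback density argument is circular as phrased: the $L^1$-continuity of the right-hand functional cannot be deduced from its agreement with the left-hand side on the dense set $C(\T)$; it requires the independent bound obtained by applying the branch substitution to $|f|$, which is exactly your first option, so that is the one to keep. Second, ``immediate from their continuity'' is slightly too quick: to compose an a.e.-defined Lebesgue measurable $f$ with $(\eta|_{I_m})^{-1}$ one needs that $\eta$ maps null sets to null sets, which follows from $\eta$ being Lipschitz on $\T$ (the same point invoked in Proposition~\ref{3.6}), not from continuity alone. Neither caveat affects the validity of your main line of argument.
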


\begin{proof}
	By an approximation argument, it is enough to establish the desired identity when $f=\chi_I$ for a sufficiently small arc $I\subseteq \T$. Since $\eta'(z)\neq 0$ for all $z\in\T$, and hence $\eta$ is locally univalent. We may assume the arc $I$ is sufficiently small so that $\eta$ is injective on $I$. With this assumption, $$\sum_{w\in \eta^{-1}(z)} \chi_I(w) = \sum_{w\in \eta^{-1}(z)\cap I} 1 = \chi_{\eta(I)}(z),$$ and hence
	\begin{align*} 
		\int_{\T} \sum_{w\in \eta^{-1}(z)} \chi_I(w) dm(z) = \int_{\T} \chi_{\eta(I)}(z) dm(z)=m(\eta(I))=\int_\T \chi_I(z) |\eta'(z)|dm(z).
	\end{align*}
	This completes the proof.
\end{proof}

When we fix a finite Blaschke product $\eta$, the notation $\thicksim $ stands for the equivalence induced by $\eta$. Namely, for two points $z,w\in \T$, $w\thicksim z$ if $\eta(w)=\eta(z).$

\begin{proposition}\label{3.5}
	Let $\eta$ be a finite Blaschke product with $\eta(0)=0$. Then
	\begin{equation*}
		\sum_{w\thicksim z}\frac{1}{|\eta'(w)|}=1, \quad {\rm for \,\, each \,\,} z\in \T.
	\end{equation*}
	In particular, $\|\eta'\|_{\infty}\geq k$, where $k$ is the degree of $\eta$.
\end{proposition}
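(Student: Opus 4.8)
The plan is to test the integral identity of Lemma \ref{3.4} against a well-chosen family of functions, and to exploit the fact that, since $\eta$ is inner with $\eta(0)=0$, the map $\eta$ preserves the Haar measure on $\T$. Set
$$\Phi(\zeta):=\sum_{w\in\eta^{-1}(\zeta)}\frac{1}{|\eta'(w)|},\qquad \zeta\in\T,$$
so that $\Phi(\eta(z))=\sum_{w\thicksim z}\frac{1}{|\eta'(w)|}$ and the first assertion is precisely $\Phi\equiv 1$. For a continuous function $g$ on $\T$, I would apply Lemma \ref{3.4} to $f:=(g\circ\eta)/|\eta'|$; this is legitimate because $\eta'$ is continuous and nonvanishing on $\T$, so $f$ is bounded. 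On the left-hand side the factor $|\eta'|$ cancels, leaving $\int_\T g(\eta(z))\,dm(z)$, while on the right-hand side every $w\in\eta^{-1}(z)$ satisfies $\eta(w)=z$, so the inner sum collapses to $g(z)\Phi(z)$. This yields
$$\int_\T g(\eta(z))\,dm(z)=\int_\T g(z)\,\Phi(z)\,dm(z).$$

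Next I would evaluate the left-hand side. Since $\eta$ is inner with $\eta(0)=0$, the family $\{\eta^n\}_{n\in\mathbb Z}$ is orthonormal in $L^2(\T,dm)$ (as already recorded after Aleksandrov's theorem), whence $\int_\T\eta^n\,dm=\delta_{n,0}$. Taking $g(\zeta)=\zeta^n$ together with its conjugate shows $\int_\T g(\eta)\,dm=\int_\T g\,dm$ first for trigonometric polynomials and then, by density, for all continuous $g$. Combining this with the displayed identity gives $\int_\T g\,\Phi\,dm=\int_\T g\,dm$ for every continuous $g$, and therefore $\Phi=1$ almost everywhere. To upgrade this to a pointwise statement, I would use that $\eta$ is a degree-$k$ covering map of $\T$ with continuous, nonvanishing derivative: locally the $k$ preimages of $\zeta$ vary continuously, so $\Phi$ is continuous, and an almost-everywhere identity between continuous functions holds everywhere. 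Hence $\Phi\equiv 1$.

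For the ``in particular'' statement, I would fix $z\in\T$ and write $\eta^{-1}(\eta(z))=\{w_1,\dots,w_k\}$. The $k$ positive numbers $1/|\eta'(w_m)|$ sum to $1$, so at least one of them is $\le 1/k$; the corresponding point satisfies $|\eta'(w_m)|\ge k$, and consequently $\|\eta'\|_\infty\ge k$.

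The step I expect to require the most care is the passage from the almost-everywhere identity $\Phi=1$ to the pointwise one, i.e.\ the continuity of $\Phi$; this rests on the local univalence and covering-map structure of finite Blaschke products established just before the proposition. The measure-preservation property of $\eta$, although standard and a direct consequence of the orthonormality of $\{\eta^n\}$, is the other ingredient one should be sure to invoke explicitly.
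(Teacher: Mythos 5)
Your proof is correct and follows essentially the same route as the paper: the paper applies Lemma \ref{3.4} to $f=\eta^n/|\eta'|$ (the special case $g(\zeta)=\zeta^n$ of your test functions $(g\circ\eta)/|\eta'|$), obtains $\int_\T z^n G(z)\,dm(z)=\delta_{n,0}$ from the vanishing moments of $\eta$, and concludes $G\equiv 1$ by continuity, exactly as you do. Your explicit framing via measure preservation of $\eta$, your justification of the continuity of $\Phi$ through local sections of the covering map, and your pigeonhole argument for $\|\eta'\|_\infty\geq k$ are all sound elaborations of steps the paper leaves implicit.
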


\begin{proof}
	We write  $F(z)=\sum_{w\thicksim z}\frac{1}{|\eta'(w)|}$ and $G(z)=\sum_{w\in \eta^{-1}(z){}}\frac{1}{|\eta'(w)|}$. Clearly, $F(z)=G(\eta(z))$.
	For each $n\in\mathbb{Z}$, applying Lemma \ref{3.4} to $f=\eta^n/|\eta'|$ we obtain
	\begin{align*}
	\int_{\mathbb{T}}\sum_{w\in\eta^{-1}(z)}\frac{\eta^n(w)}{|\eta'(w)|}dm(z)=\int_\mathbb{T}\eta^n(z)dm(z)= \begin{cases}
	1   &  n=0;\\
         0 &  n\neq 0.
	\end{cases}
	\end{align*}
	This implies that
	\begin{align}\label{eq3.1}
	\int_{\mathbb{T}} z^n G(z)dm(z)=\begin{cases}
	1   &  n=0;\\
         0 &  n\neq 0.
	\end{cases}
	\end{align}
	Note that $G$ is continuous. So \eqref{eq3.1} implies $G(z)=1$ for all $z\in \T$, and hence $F(z)=G(\eta(z))=1$ for all $z\in \T$.
\end{proof}

\begin{proposition}\label{3.6}
	Let $f$ be a Lebesgue measurable function on $\mathbb{T}$ and let $\eta$ be a finite Blaschke product. Then the function $F(z)=\sum_{w\thicksim z} f(w)$ is $\mathbf{\Sigma}(\eta)$-measurable.
\end{proposition}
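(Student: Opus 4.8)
The plan is to exploit that $F$ is, by its very definition, constant on the fibers of $\eta$, so that it factors through $\eta$, and then to reduce the $\mathbf{\Sigma}(\eta)$-measurability of $F$ to the ordinary Lebesgue measurability of the function it induces on the base circle. First I would observe that if $z_1\thicksim z_2$, then $\eta^{-1}(\eta(z_1))=\eta^{-1}(\eta(z_2))$, so the two sums defining $F(z_1)$ and $F(z_2)$ run over the same $k$ points and hence agree. Thus $F=G\circ\eta$, where $G(u):=\sum_{w\in\eta^{-1}(u)}f(w)$ is a well-defined function on $\T$ (this is the same $F(z)=G(\eta(z))$ factorization already used in the proof of Proposition \ref{3.5}). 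Granting that $G$ is Borel measurable, the conclusion is then immediate: $\eta$ is measurable from $\big(\T,\sigma(\eta)\big)$ to $\big(\T,\text{Borel}\big)$ by the very definition of $\sigma(\eta)$, so the composition $G\circ\eta$ is $\sigma(\eta)$-measurable, hence a fortiori $\mathbf{\Sigma}(\eta)$-measurable. Everything therefore comes down to measurability of $G$.

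To handle $G$, I would use the covering-map structure of $\eta$ recalled just before Lemma \ref{3.4}. Fixing a base point and the ``cut'' it determines, $\T$ splits into $k$ half-open arcs $[w_m,w_{m+1})$ on each of which $\eta$ is a bijection onto $\T$; let $\phi_m\colon\T\to[w_m,w_{m+1})$ denote the corresponding inverse branch, so that $G=\sum_{m=1}^k f\circ\phi_m$. Each $\phi_m$ is a diffeomorphism away from the single cut point (where it has at worst a jump), and because $|\eta'|$ is bounded below by the positive constant $\sum_j \frac{1-|a_j|}{1+|a_j|}$, the branch $\phi_m$ is Lipschitz and therefore carries Lebesgue-null sets to Lebesgue-null sets and Lebesgue-measurable sets to Lebesgue-measurable sets. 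Consequently each $f\circ\phi_m$ is Lebesgue measurable, and so is the finite sum $G$.

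Since $G$ is only guaranteed to be Lebesgue measurable rather than Borel, the final step is to pass to the completion. I would choose a Borel function $G_0$ with $G=G_0$ outside an $m$-null set $N\subseteq\T$. Applying Lemma \ref{3.4} to $\chi_{\eta^{-1}(N)}$ — for which $\sum_{w\in\eta^{-1}(z)}\chi_{\eta^{-1}(N)}(w)=k\,\chi_N(z)$ since all $k$ fiber points lie over the same base point — gives $\int_{\eta^{-1}(N)}|\eta'|\,dm=k\,m(N)=0$, and the lower bound on $|\eta'|$ then forces $m\big(\eta^{-1}(N)\big)=0$. Hence $F=G_0\circ\eta$ off the null set $\eta^{-1}(N)$; the function $G_0\circ\eta$ is genuinely $\sigma(\eta)$-measurable by the argument of the first paragraph, and because $\mathbf{\Sigma}(\eta)$ is complete, $F$ itself is $\mathbf{\Sigma}(\eta)$-measurable.

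The two delicate points are the measurability of $G$ across the cut point, which is harmless once one notes that a finite exceptional set cannot affect Lebesgue measurability, and the transition from Lebesgue to Borel measurability in the last paragraph. I expect the latter to be the main obstacle to phrase cleanly, since it is precisely where one must know that $\eta$ preserves null sets; fortunately Lemma \ref{3.4} combined with the uniform lower bound on $|\eta'|$ reduces this to a one-line computation.
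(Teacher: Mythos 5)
Your factorization $F=G\circ\eta$ and your final paragraph are fine: the argument that $m\big(\eta^{-1}(N)\big)=0$ via Lemma \ref{3.4} and the lower bound on $|\eta'|$, and the passage from a Borel modification $G_0$ to $F$ using completeness of $\mathbf{\Sigma}(\eta)$, are both correct. The gap is in your second paragraph, at the step ``$\phi_m$ carries null sets to null sets and measurable sets to measurable sets, consequently $f\circ\phi_m$ is Lebesgue measurable.'' Measurability of $f\circ\phi_m$ is a statement about \emph{preimages} under $\phi_m$: one needs $\phi_m^{-1}\big(f^{-1}(B)\big)$ to be Lebesgue measurable for every Borel set $B$. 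Since $\phi_m$ is a bijection of $\T$ onto $[w_m,w_{m+1})$ with inverse $\eta$, these preimages are \emph{forward images under} $\eta$, namely $\phi_m^{-1}(S)=\eta\big(S\cap[w_m,w_{m+1})\big)$. So the property you must invoke is that $\eta$ maps null sets to null sets and Lebesgue measurable sets to Lebesgue measurable sets, which follows from the \emph{upper} bound on $|\eta'|$ ($\eta$ is analytic across $\T$, hence Lipschitz there) --- exactly the observation on which the paper's proof turns (``Since $\eta$ is a Lipschitz function on $\mathbb{T}$, $\eta(E)$ is Lebesgue measurable''). The lower bound on $|\eta'|$, which is what you cite, makes $\phi_m$ Lipschitz and controls forward images under $\phi_m$; that is the wrong direction for this step (it is, however, precisely what your third paragraph needs).

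Moreover, the abstract implication you rely on is false in general: a map can carry null sets to null sets and measurable sets to measurable sets while composition with it destroys measurability. For instance, let $g$ be the standard Cantor--Lebesgue homeomorphism of $[0,1]$, which maps the Cantor set $C$ onto a set of positive measure, and let $h=g^{-1}$. Then $h$ maps null sets to null sets and measurable sets to measurable sets; but if $V$ is a non-measurable subset of $g(C)$, then $h(V)\subseteq C$ is null, so $\chi_{h(V)}$ is measurable, while $\chi_{h(V)}\circ h=\chi_V$ is not. Thus your measurability step genuinely needs the Lipschitz property of $\eta$ itself, not of the branches $\phi_m$. Once that one-line ingredient is supplied, your proof closes, and it is essentially a globalized version of the paper's argument: the paper reduces to indicator functions $f=\chi_E$ with $\eta$ injective on $E$, writes $F=\chi_{\eta^{-1}(\eta(E))}$, and splits $\eta(E)$ into a Borel part and a null part, using the same two ingredients (Lipschitzness of $\eta$ for measurability of images, and nullity of $\eta$-preimages of null sets) that your proof requires.
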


\begin{proof}
	It suffices to prove the case when $f=\chi_E$ is an indicator function for any Lebesgue measurable set $E$. Writing $\mathbb{T}$ as a finite union of arcs on which $\eta$ is injective, we may assume, without loss of generality, that $\eta$ is injective on $E$. Then $F=\chi_{\eta^{-1}(\eta(E))}$, and we need to show that $\eta^{-1}\big(\eta(E)\big)$ is $\mathbf{\Sigma}(\eta)$-measurable. Since $\eta$ is a Lipschitz function on $\mathbb{T}$, $\eta(E)$ is Lebesgue measurable. Thus $\eta(E)=F_0\cup F_1$, where $F_0$ is a null set and $F_1$ is a Borel set. Note that $\eta^{-1}(F_0)$ is also null. This implies $\eta^{-1}\big(\eta(E)\big)=\eta^{-1}(F_0)\cup\eta^{-1}(F_1)$ is $\mathbf{\Sigma}(\eta)$-measurable.
\end{proof}

With the above preparations, we are able to give a pointwise expression for the conditional expectation. 	
\begin{proposition}\label{3.7}
	Let $\eta$ be a finite Blaschke product with $\eta(0)=0$. Then for any $f\in L^1(\mathbb{T})$,
	$$
	\mathbb{E}(f|\eta)(z)=\sum_{w\thicksim z}\frac{f(w)}{|\eta'(w)|}.
	$$
\end{proposition}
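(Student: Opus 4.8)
The plan is to verify the claimed formula
$$
\mathbb{E}(f|\eta)(z)=\sum_{w\thicksim z}\frac{f(w)}{|\eta'(w)|}
$$
directly from the defining property of conditional expectation: $\mathbb{E}(f|\eta)$ is the unique $\mathbf{\Sigma}(\eta)$-measurable function satisfying $\int_A \mathbb{E}(f|\eta)\,dm = \int_A f\,dm$ for every $A\in\mathbf{\Sigma}(\eta)$. So I would denote the right-hand side by $(Tf)(z):=\sum_{w\thicksim z}f(w)/|\eta'(w)|$ and show that $Tf$ satisfies both characterizing properties.

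First I would establish measurability. Since $1/|\eta'|$ is continuous and nonvanishing on $\T$, the function $w\mapsto f(w)/|\eta'(w)|$ is just another $L^1$ function, and Proposition \ref{3.6} applied to it shows immediately that $Tf(z)=\sum_{w\thicksim z}\bigl(f(w)/|\eta'(w)|\bigr)$ is $\mathbf{\Sigma}(\eta)$-measurable. (One should also check $Tf\in L^1$, which follows from Lemma \ref{3.4} below with $|f|$.) Second, and this is the crux, I would verify the integral identity. Because every set in $\mathbf{\Sigma}(\eta)$ is, up to null sets, of the form $\eta^{-1}(B)$ for a Borel set $B\subseteq\T$, and indicator functions of such preimages can be written as $\chi_B\circ\eta$, it suffices to prove
$$
\int_\T (\chi_B\circ\eta)(z)\,Tf(z)\,dm(z)=\int_\T (\chi_B\circ\eta)(z)\,f(z)\,dm(z)
$$
for all Borel $B$, i.e. it suffices to show $\int_\T h(\eta(z))\,Tf(z)\,dm(z)=\int_\T h(\eta(z))\,f(z)\,dm(z)$ for every bounded Borel $h$ on $\T$.

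The key computation is to apply Lemma \ref{3.4} to the function $g:=(h\circ\eta)\cdot f/|\eta'|$. Since $h\circ\eta$ is constant on each equivalence class (its value at $w$ depends only on $\eta(w)$), summing $g$ over a fiber $\eta^{-1}(z)$ factors as
$$
\sum_{w\in\eta^{-1}(z)}g(w)=h(z)\sum_{w\in\eta^{-1}(z)}\frac{f(w)}{|\eta'(w)|}=h(z)\,(Tf)(w_0)\quad\text{for any }w_0\in\eta^{-1}(z),
$$
so that $\sum_{w\in\eta^{-1}(z)}g(w)=h(z)\,G(z)$ where $G$ denotes the common value of $Tf$ on the fiber over $z$. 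Lemma \ref{3.4} then gives $\int_\T (h\circ\eta)\cdot\frac{f}{|\eta'|}\cdot|\eta'|\,dm=\int_\T h(z)G(z)\,dm(z)$, and since $Tf=G\circ\eta$ the left side is exactly $\int_\T (h\circ\eta)\,f\,dm$ while the right side, by Lemma \ref{3.4} applied once more (or by the covering-map change of variables), equals $\int_\T (h\circ\eta)\,Tf\,dm$. This establishes the required identity and hence $Tf=\mathbb{E}(f|\eta)$.

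The main obstacle I anticipate is bookkeeping with Lemma \ref{3.4}, specifically making sure the test-function reduction is clean: one must confirm that $\mathbf{\Sigma}(\eta)$-measurable test sets really do reduce to the form $\eta^{-1}(B)$ up to null sets (which is exactly the content underlying Proposition \ref{3.6}), and that bounded $h\circ\eta$ suffices to pin down $Tf$ among $L^1$ functions. A mild technical point is ensuring integrability so that all applications of Lemma \ref{3.4} are legitimate; this is handled by first treating $f\ge 0$ (or $f=\chi_E$) and invoking monotone/linearity arguments, exactly as in the proof of Lemma \ref{3.4} itself. Once the fiberwise factorization $\sum_{w\in\eta^{-1}(z)}h(\eta(w))f(w)/|\eta'(w)|=h(z)(Tf$ on the fiber$)$ is recorded, the verification of both defining properties is routine.
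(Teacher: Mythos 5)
Your overall strategy coincides with the paper's: define the candidate $Tf$, get $\mathbf{\Sigma}(\eta)$-measurability from Proposition \ref{3.6}, and verify the defining integral identity of conditional expectation via Lemma \ref{3.4}. Your choice of test functions $h\circ\eta$ (indicators of sets $\eta^{-1}(B)$) is in fact slightly more direct than the paper's family $\{\eta^n: n\in\mathbb Z\}$, which needs a Fourier-uniqueness argument for pushforward measures. Your first application of Lemma \ref{3.4}, yielding $\int_{\T}(h\circ\eta)f\,dm=\int_{\T} h\,G\,dm$, is also correct.

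The gap is in your last step, where you assert that $\int_{\T} h\,G\,dm=\int_{\T}(h\circ\eta)(G\circ\eta)\,dm$ follows ``by Lemma \ref{3.4} applied once more (or by the covering-map change of variables).'' It does not: applying Lemma \ref{3.4} to $\psi=\big((hG)\circ\eta\big)/|\eta'|$ gives
\begin{equation*}
\int_{\T}(hG)\circ\eta\,dm=\int_{\T} h(z)\,G(z)\,\Bigl(\sum_{w\in\eta^{-1}(z)}\frac{1}{|\eta'(w)|}\Bigr)\,dm(z),
\end{equation*}
and the extra weight $S(z):=\sum_{w\in\eta^{-1}(z)}1/|\eta'(w)|$ cannot be removed by Lemma \ref{3.4} alone; the covering-map change of variables produces the same weight. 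The missing ingredient is Proposition \ref{3.5}, which says precisely that $S\equiv 1$ (equivalently, that $\eta$ preserves Haar measure), and this is the one and only place where the hypothesis $\eta(0)=0$ enters. Your proof never uses $\eta(0)=0$, which is a warning sign, because the proposition is false without it: if $\eta$ is a degree-one Blaschke product with $\eta(0)=a\neq 0$, then $\mathbf{\Sigma}(\eta)$ is the full Lebesgue $\sigma$-algebra, so $\mathbb{E}(f|\eta)=f$, while your formula returns $f/|\eta'|\neq f$ since $|\eta'|$ is nonconstant. The repair is one line---invoke Proposition \ref{3.5} to identify $S\equiv 1$ at that step, exactly as the paper does---after which your argument is complete.
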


\begin{proof}
	Given a function $f \in L^1(\mathbb{T})$, the function $$F(z)=\sum_{w \thicksim z} \frac{f(w)}{|\eta'(w)|}$$ is $\mathbf{\Sigma}(\eta)$-measurable by Proposition \ref{3.6}. As a consequence, to show $\E(f| \eta)=F$ it is enough to establish
	\begin{equation*}
		\int_{\mathbb{T}} \eta^n(z) F(z) \, dm(z)=\int_{\mathbb{T}} \eta^n(z) f(z) \, dm(z),
	\end{equation*}
	for each $n \in \mathbb{Z}$.
	This can be deduced, by applying Lemma \ref{3.4}, Proposition \ref{3.5}, and the following calculations:
	\begin{align*}
		&\int_{\mathbb{T}} \eta^n(z) F(z) \, dm(z)\\
		=&\int_{\mathbb{T}} \bigg(\sum_{w \in \eta^{-1}(z)} \frac{\eta^n(w) \cdot F(w) }{|\eta'(w)|}\bigg)\,dm(z) \\
		=&\int_{\mathbb{T}} z^n \cdot \bigg(\sum_{w \in \eta^{-1}(z)} \frac{F(w)}{|\eta'(w)|} \bigg)\, dm(z)\\
		=&\int_\T z^n \cdot \bigg( \sum_{w \in \eta^{-1}(z)} \frac{1}{|\eta'(w)|} \sum_{\zeta \thicksim w}\frac{f(\zeta)}{|\eta'(\zeta)|}\bigg) \, dm(z)\\
		=&\int_\T z^n \cdot \bigg(\sum_{w \in \eta^{-1}(z)} \frac{1}{|\eta'(w)|}\bigg) \cdot \bigg( \sum_{\zeta \in \eta^{-1}(z)} \frac{f(\zeta)}{|\eta'(\zeta)|}\bigg) \, dm(z)\\
		=&\int_\T z^n \cdot \bigg( \sum_{\zeta \in \eta^{-1}(z)} \frac{f(\zeta)}{|\eta'(\zeta)|}\bigg)\, dm(z)\\
		=&\int_{\mathbb{T}} \sum_{\zeta \in \eta^{-1}(z)} \frac{\eta^n(\zeta) \cdot f(\zeta) }{|\eta'(\zeta)|}\, dm(z) \\
		=&\int_{\mathbb{T}} \eta^n(z) f(z) \, dm(z).
	\end{align*}
	The proof is completed.
	\end{proof}

We now proceed to prove the remaining part of Theorem \ref{1.2}. 

\begin{proof}[Proof of Theorem \ref{1.2}]

	It remains to prove
	\begin{equation*}
		\|\mathbb{E}(\cdot|\eta)\|_{H^p}=\|\mathbb{E}(\cdot|\eta)\|_{L^p}=\|\eta'\|_{\infty}^{\frac{1}{p}-1}
	\end{equation*}
	for any finite Blaschke product $\eta$ with $\eta(0)=0$.

	Clearly, $\|\mathbb{E}(\cdot|\eta)\|_{H^p}\leq\|\mathbb{E}(\cdot|\eta)\|_{L^p}$. We first establish 
    $\|\mathbb{E}(\cdot|\eta)\|_{L^p}\leq \|\eta'\|_\infty^{\frac{1}{p}-1}$. For any $f\in L^1(\mathbb{T})$, by proposition \ref{3.7} we have
	\begin{align*}
		\int_{\mathbb{T}}|\mathbb{E}(f|\eta)(z)|^p \, dm(z) &= \int_{\mathbb{T}} \,\,\left| \sum_{w\sim z} \frac{f(w)}{|\eta'(w)|} \right|^p dm(z) \\
		&\leq \int_{\mathbb{T}} \,\,\sum_{w\sim z} \frac{|f(w)|^p}{|\eta'(w)|^p} dm(z) \\
		&= \int_{\mathbb{T}} \mathbb{E}\Big(|f|^p|\eta'|^{1-p}\Big|\eta\Big)(z) \, dm(z) \\
		&= \int_{\mathbb{T}} |f(z)|^p |\eta'(z)|^{1-p} \, dm(z) \\
		&\leq \|f\|^p_p \|\eta'\|_\infty^{1-p}.
	\end{align*}
	Thus, $\|\mathbb{E}(\cdot|\eta)\|_{L^p}\leq \|\eta'\|_\infty^{\frac{1}{p}-1}$.
	
	Next, we show $\|\mathbb{E}(\cdot|\eta)\|_{H^p}\geq \|\eta'\|_\infty^{\frac{1}{p}-1}$. Given $z_0\in\mathbb{T}$ and $\varepsilon>0$, there exists an open arc $I$ containing $z_0$ such that for each $z\in I$,
	\begin{equation*}
		\Big| \frac{1}{|\eta'(z_0)|}-\frac{1}{|\eta'(z)|} \Big| <\varepsilon,
	\,\,\,{\rm and }\,\,\,
		\Big| |\eta'(z_0)|-|\eta'(z)| \Big| <\varepsilon.
	\end{equation*}
	Moreover, we can choose $I$ sufficiently small so that $J:= \eta^{-1}\big(\eta(I)\big)$ consists of exactly $k$ disjoint open arcs.
	Now for sufficiently large $t$, there exists a function $f_t\in H^\infty(\mathbb{T})$ such that
	\begin{equation*}
		|f_t(z)|=
		\begin{cases}
			t &  \quad z\in I; \\
			1 & \quad z\notin I.
		\end{cases}
		\quad
	\end{equation*}
	It follows from Proposition \ref{3.7} that
	\begin{align}\label{eq3.2}
		|\mathbb{E}(f_t|\eta)(z)|&= \bigg| \sum_{w\thicksim z}\frac{f_t(w)}{|\eta'(w)|}\bigg|\nonumber\\
		 &\geq t\left(\frac{1}{|\eta'(z_0)|}-\varepsilon\right)-(k-1)\big\|\frac{1}{\eta'}\big \|_\infty, \quad z\in J.
	\end{align}
	and
	\begin{align}\label{eq3.3}
	m(J)&= \int_{\T} \,\, \sum_{w\thicksim z} \chi_I(w)\, dm(z)\nonumber\\
	&=\int_\T \E\Big(|\eta' | \cdot \chi_I \Big |\eta\Big)(z) dm(z)\nonumber\\
	&=\int_\T |\eta' (z)| \cdot \chi_I(z) dm(z)\nonumber\\
	&\geq \big(|\eta'(z_0)|-\varepsilon\big) \,m(I).
	\end{align}
	Combining the lower estimations \eqref{eq3.2} and \eqref{eq3.3} we have
	\begin{align}\label{eq3.4}
		\|\E(f_t| \eta)\|_{p} \geq  \left(t\left(\frac{1}{|\eta'(z_0)|}-\varepsilon\right)-(k-1)\big\|\frac{1}{\eta'}\big \|_\infty \right)\cdot \big(|\eta'(z_0)|-\varepsilon\big)^{1/p} \,m(I)^{1/p}.
	\end{align}
	Moreover, we have an upper bound
	\begin{equation}\label{eq3.5}
		\|f_t\|_{p} \leq \big( t^p m(I)+1\big)^{1/p}.
	\end{equation}
	Now by \eqref{eq3.4} and \eqref{eq3.5}
	\begin{align*}
		\|\mathbb{E}(\cdot|\eta)\|_{H^p}\geq &\limsup _{t\to \infty} \frac{\|\E(f_t| \eta)\|_p }{\|f_t\|_{p}}\\
		\geq &\left(\frac{1}{|\eta'(z_0)|}-\varepsilon\right)\big(|\eta'(z_0)|-\varepsilon\big)^{\frac{1}{p}}.
	\end{align*}
	Letting $\varepsilon\rightarrow 0$, we see $\|\mathbb{E}(\cdot|\eta)\|_{H^p}\geq |\eta'(z_0)|^{\frac{1}{p} -1}$. Since $z_0\in \T$ is arbitrary,
	\begin{equation*}
		\|\mathbb{E}(\cdot|\eta)\|_{H^p}\geq \|\eta'\|_\infty^{\frac{1}{p}-1} .
	\end{equation*}
	This finishes the proof of Theorem \ref{1.2}. 
	\end{proof}

	For convenience, we restate Theorem \ref{1.3} below. It is an immediate consequence of Theorems \ref{1.1} and \ref{1.2}.

	\begin{theorem}\label{3.8}
	Suppose $0<p<1$. Let $P: H^p(\T)\to H^p(\T)$ be a contractive projection satisfying $P{\bf 1}={\bf 1}$. If $P z^n\in H^\infty(\mathbb{T})$ for each $n\geq 1$, then either $$Pf=f(0),\quad f\in H^p(\T),$$ or $P$ is the identity.
  \end{theorem}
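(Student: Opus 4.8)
The plan is to reduce the statement to the two structural theorems already at hand, using Aleksandrov's Theorem~\ref{3.1} as the bridge between them. First I would apply the extension Theorem~\ref{1.1} with $X=H^p(\T)\subseteq L^p(\T)$. Since $\one$ and every monomial $z^n$ are bounded and, by hypothesis, $P\one=\one$ and $Pz^n\in H^\infty(\T)$ for $n\geq 1$, each monomial $z^n$ with $n\geq 0$ lies in $\mathcal E=\{f\in X: f,Pf\in L^\infty\}$. By linearity every analytic polynomial lies in $\mathcal E$, so Theorem~\ref{1.1} furnishes a complete sub $\sigma$-algebra ${\bf \Sigma}'$ with $Pq=\E(q|{\bf \Sigma}')$ for all analytic polynomials $q$.

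Next I would verify that $\E(\cdot|{\bf \Sigma}')$ leaves $H^1(\T)$ invariant, so that Aleksandrov's theorem applies. For a polynomial $q$ one has $\E(q|{\bf \Sigma}')=Pq\in H^\infty\subseteq H^1$; for general $f\in H^1$, approximating $f$ by analytic polynomials in $H^1$ and using that $\E(\cdot|{\bf \Sigma}')$ is an $L^1$-contraction, together with the closedness of $H^1$ in $L^1$, gives $\E(f|{\bf \Sigma}')\in H^1$. Theorem~\ref{3.1} then forces ${\bf \Sigma}'$ to be either trivial (generated by $\one$) or equal to ${\bf \Sigma}(\eta)$ for an inner function $\eta$ with $\eta(0)=0$.

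Finally I would dispose of the two cases. In the trivial case $\E(q|{\bf \Sigma}')=q(0)$ for every polynomial $q$, so $Pq=q(0)$; density of analytic polynomials in $H^p(\T)$ and continuity of both $P$ and of point evaluation at the origin on $H^p(\T)$ yield $Pf=f(0)$ for all $f$. In the case ${\bf \Sigma}'={\bf \Sigma}(\eta)$, the operators $P$ and $\E(\cdot|\eta)$ agree on the dense subspace of analytic polynomials, so $P$ is precisely the continuous extension of $\E(\cdot|\eta)$ to $H^p(\T)$, and their $H^p$-operator norms coincide. By Theorem~\ref{1.2} (equivalently Theorem~\ref{3.3}), $\eta$ must then be a finite Blaschke product and $\|P\|_{H^p}=\|\E(\cdot|\eta)\|_{H^p}=\|\eta'\|_\infty^{1/p-1}$. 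Since $P$ is contractive and $1/p-1>0$, this forces $\|\eta'\|_\infty\leq 1$; combined with the remark following Theorem~\ref{1.2} that $\|\eta'\|_\infty=1$ holds only when $\eta=cz$ for a unimodular constant $c$, we obtain $\eta=cz$. Consequently ${\bf \Sigma}(\eta)={\bf \Sigma}$ is the full $\sigma$-algebra, $\E(\cdot|\eta)$ is the identity on polynomials, and by density $P$ is the identity on $H^p(\T)$.

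The main obstacle is not any single hard estimate, since the substantive work is already packaged in Theorems~\ref{1.1} and~\ref{1.2}; rather it is the careful bridging between them. The two delicate points are establishing the $H^1$-invariance that licenses Aleksandrov's theorem, and justifying that $P$ is genuinely the continuous extension of $\E(\cdot|\eta)$, so that the sharp norm formula of Theorem~\ref{1.2} can be transferred to $\|P\|$ and the contractivity hypothesis converted into the constraint $\|\eta'\|_\infty\leq 1$. Some attention is also needed to keep all density and approximation arguments valid in the quasi-Banach regime $0<p<1$, where one leans on the boundedness of point evaluations and on the density of analytic polynomials in $H^p(\T)$.
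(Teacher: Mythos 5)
Your proof is correct and follows essentially the same route as the paper's: apply Theorem \ref{1.1} to get $P=\mathbb{E}(\cdot|\mathbf{\Sigma}')$ on the analytic polynomials, use $H^1$-invariance and Aleksandrov's theorem to split into the trivial case and the case $\mathbf{\Sigma}'=\mathbf{\Sigma}(\eta)$, and then invoke Theorem \ref{1.2} to force $\eta$ to be a finite Blaschke product with $\|\eta'\|_\infty=1$, hence $\eta=cz$ and $P$ the identity. The only difference is that you spell out the density and approximation details (identifying $P$ with $\mathbb{E}(\cdot|\eta)$ on all of $H^1(\T)$ before transferring the norm formula) that the paper leaves implicit.
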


	\begin{proof} 
	 By Theorem \ref{1.1}, there exists a sub $\sigma$-algebra ${\bf \Sigma}'$ such that $$Pf=\E(f|{\bf \Sigma}'), \quad \forall f\in \mathcal E,$$ 
     where $\cal E=\{f\in H^\infty(\T): Pf\in H^\infty(\T) \}$.
     Since $P$ maps monomials to bounded functions, $\cal E$ contains all analytic polynomials. This implies that $\E(\cdot |{\bf \Sigma}')$ leaves $H^1(\T)$ invariant. Then it follows from Aleksanderov's theorem that ${\bf \Sigma}'$ is trivial or generated by an inner function $\eta$ with $\eta(0)=0$. If ${\bf \Sigma}'$ is trivial, then $Pf=f(0).$ If we are in the second case, then Theorem \ref{1.2} implies that $\eta$ is a finite Blaschke product and $\|\eta'\|_\infty =1 $. It follows from Proposition \ref{3.5} that $\eta=cz$ for some unimodular constant $c$, and hence $P$ is the identity.
	\end{proof} 

	It seems that the assumption $Pz^n \in H^\infty(\T)$ in Theorem \ref{3.8} arises from technical reasons. We conjecture that this assumption can be dropped. Specifically, we conjecture that if $P$ is a contractive projection on $H^p(\T)$ with $0<p<1$ satisfying $P\one=\one$, then it automatically maps bounded functions to bounded functions, and therefore $Pf=f(0)$ or $P$ is the identity.

\section{contractive projection sets for $H^p(\mathbb{T}^d),~0<p<1$.}

In this section, we apply the results in previous sections to study contractive idempotent coefficient multipliers on $H^p(\T^d)$ for $0<p<1$ and $1\leq d\leq \infty$.
\iffalse
For $1\leq d\leq \infty$, we denote by $\T^d$ the $d$-dimensional (countably infinite if $d=\infty$) torus equipped with the normalized Haar measure $m_d$. For $0<p<\infty$, the Hardy space $H^p(\T^d)$ is defined as the closure of analytic polynomials in $L^p(\T^d)$. 
\fi
Suppose that $\Gamma $ is a nonempty subset of $\mathbb N_0^{(d)}$ and $P_\Gamma$ is the associated idempotent coefficient multiplier defined by \eqref{eq1.1}.
As we mentioned in the introduction, Brevig, Ortega-Cerd\`{a} and Seip \cite{BOS} completely determined when $P_{\Gamma}$ extends to a contraction on $H^p(\T^d)$ for $1\leq p\leq \infty$. In their work, such $\Gamma$ is termed a contractive projection set for $H^p(\mathbb{T}^d)$. Our main aim in this section is to characterize contractive projection sets for $H^p(\mathbb{T}^d)$ for the remaining case $0<p<1$.

Observe that both $\{0\}$ (corresponding to $Pf=f(0)$) and $\mathbb{N}_0$ (corresponding to $P$ being the identity) are contractive projection sets for $H^p(\mathbb{T})$. In fact, there are no others when $0<p<1$.

\begin{theorem}\label{4.1}
	Suppose that $0 < p < 1$ and $\Gamma $ is a contractive projection set for $H^p(\T)$. Then $\Gamma=\{0\}$ or $\Gamma=\mathbb{N}_0$.
\end{theorem}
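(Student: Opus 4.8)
The plan is to split according to whether $0\in\Gamma$. A contractive projection set gives a nonzero idempotent $P_\Gamma$, so $\Gamma$ is nonempty, and $P_\Gamma$ sends each monomial $z^n$ to either $z^n$ or $0$; in particular $P_\Gamma z^n\in H^\infty(\T)$ for every $n$, so the boundedness hypothesis in Theorem \ref{3.8} is automatic. If $0\in\Gamma$, then $P_\Gamma\mathbf 1=\mathbf 1$, and Theorem \ref{3.8} applies directly: either $P_\Gamma f=f(0)$ for all $f$, forcing $\Gamma=\{0\}$, or $P_\Gamma$ is the identity, forcing $\Gamma=\mathbb N_0$. This disposes of the case $0\in\Gamma$ with no further work.

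The substance of the proof is the case $0\notin\Gamma$, where Theorem \ref{3.8} is unavailable because $P_\Gamma\mathbf 1=0$. Here I would show that $P_\Gamma$ cannot be contractive. Set $m=\min\Gamma\ge 1$ and test on $f=(1+z^m)^2=1+2z^m+z^{2m}$. Since $0\notin\Gamma$ while $m\in\Gamma$, the constant term is annihilated, so $P_\Gamma f=2z^m$ when $2m\notin\Gamma$, and $P_\Gamma f=2z^m+z^{2m}=z^m(2+z^m)$ when $2m\in\Gamma$. In the first case $\|P_\Gamma f\|_p=\|2z^m\|_p=2$. In the second case, multiplication by the unimodular $z^m$ is an $L^p$-isometry, so $\|P_\Gamma f\|_p=\|2+z^m\|_p\ge 2$, where the last bound uses that $h=2+z^m$ satisfies $h(0)=2$ together with the pointwise estimate $|h(0)|\le\|h\|_{H^p}$ (valid for all $p>0$ by Jensen's inequality for $\log|h|$). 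Hence $\|P_\Gamma f\|_p\ge 2$ in either case.

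It remains to show $\|f\|_p<2$, which together with the previous paragraph contradicts $\|P_\Gamma\|\le 1$. Writing $|1+z^m|^2=2+2\cos(m\theta)$ on $\T$,
\[
\|f\|_p^p=\frac1{2\pi}\int_0^{2\pi}\big(2+2\cos(m\theta)\big)^p\,d\theta=2^p\cdot\frac1{2\pi}\int_0^{2\pi}\big(1+\cos(m\theta)\big)^p\,d\theta<2^p,
\]
the strict inequality being Jensen's inequality applied to the strictly concave map $t\mapsto t^p$ ($0<p<1$) and the nonconstant function $1+\cos(m\theta)$, whose average against $d\theta/2\pi$ equals $1$. Thus $\|f\|_p<2\le\|P_\Gamma f\|_p$, so $\|P_\Gamma\|\ge\|P_\Gamma f\|_p/\|f\|_p>1$, contradicting contractivity; therefore $0\notin\Gamma$ is impossible, and the theorem follows.

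I expect the only genuinely delicate point to be the case $0\notin\Gamma$, where Theorem \ref{3.8} gives no information and one must instead exhibit an explicit norm-increasing vector. The choice $(1+z^m)^2$ works uniformly precisely because it involves only the frequencies $0,m,2m$: the constant is in $\mathrm{Ker}\,P_\Gamma$, $z^m$ is in $\mathrm{Ran}\,P_\Gamma$, and the point-evaluation bound $|h(0)|\le\|h\|_{H^p}$ absorbs all uncertainty about whether $2m\in\Gamma$. The strict concavity available only for $p<1$ is exactly what makes $\|f\|_p<2$; at $p=1$ the inequality degenerates into an equality, which is consistent with singletons $\{m\}$ being contractive projection sets in the range $p\ge 1$ and explains why the present phenomenon is special to $0<p<1$.
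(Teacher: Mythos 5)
Your proof is correct, and in the decisive case it takes a genuinely different route from the paper. Both arguments treat the case $0\in\Gamma$ identically: $P_\Gamma\mathbf 1=\mathbf 1$ and $P_\Gamma z^n\in H^\infty(\T)$ feed directly into Theorem \ref{3.8}. The divergence is in ruling out $0\notin\Gamma$. The paper sets $k=\min\Gamma\geq 1$, conjugates by the shift, $Qf=\bar z^k P_\Gamma(z^kf)$, views $Q$ as a unital contractive projection on the closed subspace $X=\{f\in L^p(\T):z^kf\in H^p(\T)\}$ of $L^p(\T)$, and applies Theorem \ref{1.1} together with Theorem \ref{3.8}; this reduces matters to $\Gamma=\{k\}$ or $\Gamma=\{k,k+1,\ldots\}$, the latter excluded by a $\sigma$-algebra measurability argument and the former by the Brevig--Saksman sharp coefficient estimate $c(1,p)=\sqrt{2/p}\,(1-p/2)^{1/p-1/2}>1$. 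You instead test the single polynomial $f=(1+z^m)^2$, $m=\min\Gamma$: the point-evaluation bound $|h(0)|\le\|h\|_{H^p}$ and unimodular invariance give $\|P_\Gamma f\|_p\geq 2$ in both subcases $2m\in\Gamma$ and $2m\notin\Gamma$, while strict Jensen for the strictly concave map $t\mapsto t^p$ gives $\|f\|_p<2$, a contradiction. Your route is more elementary and self-contained: it needs neither the extension theorem on the auxiliary space $X$ nor the quoted value of $c(1,p)$ from \cite{BS}, and it handles all $\Gamma$ with $0\notin\Gamma$ uniformly instead of splitting into $\{k\}$ and $\{k,k+1,\ldots\}$. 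What the paper's argument buys in exchange is structural information (the measurability description of $\Gamma$ coming from Theorem \ref{1.1}, and a quantitative lower bound for the norm of the coefficient functional), which your purely qualitative contradiction does not yield. Your closing observation is also accurate: at $p=1$ your Jensen step degenerates to equality, consistent with singletons being contractive projection sets when $p\geq 1$, so the argument correctly isolates the phenomenon to $0<p<1$.
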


\begin{proof}
Suppose $k$ is the smallest integer in $\Gamma$. We claim that $k=0$. Once this claim is established, it follows that $P_\Gamma \one = \one$, and the conclusion is an immediate consequence of Theorem \ref{3.8}. Indeed, now assume $k\neq 0$ by contradiction. Define
	$$
	Qf = \bar{z}^k P_{\Gamma}\big(z^{k} f(z)\big), \quad f\in X,
	$$
	where $$X:= \{f\in L^p(\T): z^k f \in H^p(\T)\}.$$
	Clearly, $X$ is a closed subspace of $L^p(\T)$ and $Q$ extends to a contractive projection on $X$ satisfying $Q\one =\one$. According to Theorem \ref{1.1}, there exists a sub $\sigma$-algebra ${\bf \Sigma}'$ such that 
	\begin{align}\label{eq4.1}
		Q(z^{n-k})=\E(z^{n-k}|{\bf \Sigma}'), \quad n\geq 0.
	\end{align}
	Observe that an non-negative integer $n$ we have $$n\in \Gamma \quad \Leftrightarrow \quad P_\Gamma z^n=z^n \quad  \Leftrightarrow\quad  Q(z^{n-k})=z^{n-k}.$$ Combining this with \eqref{eq4.1} we deduce that 
	\begin{align}\label{eq4.2}
		\Gamma = \{n\geq 0: z^{n-k} \,\, {\rm is}\,\, {\bf \Sigma}'-{\rm measurable}\}.
	\end{align}
	Note that $Q=P_{\Gamma-k}$ on $H^p(\T)$, so by Theorem \ref{3.8}, $\Gamma-k=\{0\}$ or $\Gamma-k=\mathbb N_0$. Equivalently, $\Gamma=\{k\}$ or $\Gamma=\{k,k+1,\ldots\}$. 
	
	If $\Gamma = \{k, k+1, \ldots\}$, then \eqref{eq4.2} implies that the coordinate function $z$ is ${\bf \Sigma}'$-measurable. Hence $\Gamma = \mathbb{N}_0$, which contradicts the assumption that $k \neq 0$.

	It remains to exclude the case $\Gamma = \{k\}$. To this end, it is sufficient to show that $c(k, p) > 1$, where $c(k, p)$ denotes the norm of the $k$-th Taylor coefficient functional on $H^p(\T)$. For $f \in H^p(\mathbb{T})$, define $f_k(z) = f(z^k)$. Then $\|f\|_p = \|f_k\|_p$ and $f'(0) = f_k^{(k)}(0)/k !$. Thus $c(k, p) \geq c(1, p)$. Now the proof is completed as the explicit value
	$$
	c(1, p) = \sqrt{\frac{2}{p}} \left(1 - \frac{p}{2}\right)^{\frac{1}{p} - \frac{1}{2}}>1
	$$
	was established by Brevig and Saksman \cite{BS}.
\end{proof}

For $1\leq d \leq \infty$ and a subset $J\subseteq \{1, 2, \cdots, d\}$ (if $d=\infty$, $J\subseteq \mathbb{N}$),  we define $\mathbb{N}_J=\{\alpha\in \mathbb N_0^d: \alpha_j =0\,\, {\rm for\,\, all }\,\, j\in J\}$. For an analytic polynomial $f$ and $p>0$, the function $|f|^p$ is plurisubharmonic. Using the sub-mean value principle, it is easy to verify that $\mathbb N_J$ is a contractive projection set for $H^p(\T^d)$. Actually, when $0< p<1$,  the collection $\{\mathbb N_J: J \,\, {\rm runs\,\, over\,\, all\,\, subsets } \} $ exhausts all contractive projection sets for $H^p(\T^d)$.

\begin{theorem}\label{4.2}
	Suppose that $0<p<1$ and $1\leq d\leq \infty$, and $\Gamma$ is a contractive projection set for $H^p(\mathbb T^d)$. Then $\Gamma=\mathbb{N}_J$ for some subset $J$.
\end{theorem}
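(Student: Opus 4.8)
The plan is to reduce Theorem \ref{4.2} to its one‑dimensional counterpart (Theorem \ref{4.1}, hence Theorem \ref{3.8}) by means of isometric ``monomial slices'', and then to isolate a single genuinely two‑dimensional obstruction. First I set up the slices. For a nonzero $v\in\mathbb N_0^{(d)}$ and any $\gamma\in\mathbb N_0^{(d)}$, the character $z^v$ pushes the Haar measure of $\T^d$ forward to the Haar measure of $\T$, so the map $g\mapsto z^\gamma\,g(z^v)$ is an isometry of $H^p(\T)$ into $H^p(\T^d)$ (note $|z^\gamma|\equiv1$). Since $n\mapsto\gamma+nv$ is injective, its image $\overline{\mathrm{span}}\{z^{\gamma+nv}:n\ge0\}$ is $P_\Gamma$‑invariant and
\[
P_\Gamma\big(z^\gamma g(z^v)\big)=z^\gamma\,(P_{D}g)(z^v),\qquad D=D_{\gamma,v}:=\{n\ge0:\gamma+nv\in\Gamma\}.
\]
Thus $P_D$ is a contractive projection on $H^p(\T)$, and Theorem \ref{4.1} (with the standing convention $P\neq0$) forces $D\in\{\varnothing,\{0\},\mathbb N_0\}$ for \emph{every} $\gamma$ and $v$. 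Taking $v=e_j$ and $\gamma_j=0$, this says that $\Gamma$ meets each line $\{\gamma+ne_j:n\ge0\}$ in the empty set, the single base point $\gamma$, or the whole line.

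From this line structure I record a reduction principle: if $\alpha\in\Gamma$ then $\alpha-\alpha_je_j\in\Gamma$ for each $j$ (when $\alpha_j\ge1$, the $e_j$‑line through $\gamma=\alpha-\alpha_je_j$ meets $\Gamma$ at positive height, hence is full, so its base $\gamma\in\Gamma$). Iterating over the finite support of any $\alpha\in\Gamma$ gives $0\in\Gamma$. Set $J:=\{j:e_j\notin\Gamma\}$. If some $\alpha\in\Gamma$ had $\alpha_j\ge1$ with $j\in J$, zeroing all other coordinates would produce $\alpha_je_j\in\Gamma$; since $0\in\Gamma$, the $e_j$‑line would then be full, forcing $e_j\in\Gamma$, a contradiction. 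Hence $\Gamma\subseteq\mathbb N_J$.

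For the reverse inclusion $\mathbb N_J\subseteq\Gamma$ it suffices to prove that $\beta\in\Gamma$ and $j\notin J$ imply $\beta+e_j\in\Gamma$, for then every $\alpha$ supported in $J^c$ is built up from $0\in\Gamma$. When $\beta_j\ge1$ or $\beta=0$ this is immediate from the line structure. The remaining case $\beta\neq0$, $\beta_j=0$, $j\notin J$ is the heart, and here I collapse $\beta$ to a single variable: because $\beta$ and $e_j$ have disjoint supports, $(z^\beta,z_j)$ is Haar‑uniform on $\T^2$, so $h\mapsto h(z^\beta,z_j)$ embeds $H^p(\T^2)$ isometrically, and $\tilde\Gamma:=\{(a,b):a\beta+be_j\in\Gamma\}$ is a contractive projection set for $H^p(\T^2)$. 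Applying the one‑variable slices to $\tilde\Gamma$ shows it contains both coordinate axes of $\mathbb N_0^2$, and the same line‑dichotomy shows that any contractive projection set containing both axes is \emph{either} all of $\mathbb N_0^2$ \emph{or} the cross $(\mathbb N_0\times\{0\})\cup(\{0\}\times\mathbb N_0)$. Excluding the cross then gives $(1,1)\in\tilde\Gamma$, i.e.\ $\beta+e_j\in\Gamma$, completing the proof.

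The entire argument therefore rests on the genuinely two‑dimensional fact that the cross is \emph{not} a contractive projection set when $0<p<1$, and I expect this to be the main difficulty. No monomial slice can detect it, since every line $\{\gamma+nv\}$ with $v\in\mathbb N_0^2\setminus\{0\}$ meets the cross in an admissible pattern; a truly non‑convex, two‑variable test is needed. As $P_{\mathrm{cross}}f=f(z_1,0)+f(0,z_2)-f(0,0)$, testing on $f=(1+z_1)(1+z_2)$ gives $P_{\mathrm{cross}}f=1+z_1+z_2$, so non‑contractivity amounts to the strict inequality
\[
\int_{\T^2}|1+z_1+z_2|^p\,dm_2 \;>\; \Big(\int_{\T}|1+z|^p\,dm\Big)^2 .
\]
Its source is the strict gap between Mahler measures, $\int_{\T^2}\log|1+z_1+z_2|\,dm_2>0=\int_{\T}\log|1+z|\,dm$ (Smyth), which makes the inequality immediate as $p\to0^+$; verifying it throughout $0<p<1$—possibly after replacing the test function by a scaled or higher‑degree product—is the technical core, and is exactly where the non‑convexity of the $L^p$‑quasinorm for $p<1$ enters.
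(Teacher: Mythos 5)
Your reduction scheme is sound as far as it goes, and up to the point you call the ``heart'' it essentially parallels the paper: your monomial slices with $v=e_j$ are exactly the paper's one-variable projections $q\mapsto \bar z^{\beta^{[i]}}P_\Gamma\big(z^{\beta^{[i]}}q(z_i)\big)$, and from them you correctly deduce the line trichotomy, $0\in\Gamma$, and $\Gamma\subseteq\mathbb{N}_J$. The genuine gap is the step you yourself flag: you never prove that the cross $(\mathbb{N}_0\times\{0\})\cup(\{0\}\times\mathbb{N}_0)$ fails to be a contractive projection set for $H^p(\mathbb{T}^2)$ when $0<p<1$, and the entire reverse inclusion $\mathbb{N}_J\subseteq\Gamma$ rests on that one statement. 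Moreover, the specific verification you propose does not work on the whole range: writing $W_3(p)=\int_{\mathbb{T}^2}|1+z_1+z_2|^p\,dm_2$ and $W_2(p)=\int_{\mathbb{T}}|1+z|^p\,dm$, your test function $f=(1+z_1)(1+z_2)$ witnesses non-contractivity precisely when $W_3(p)>W_2(p)^2$. At $p=1$ one has $W_3(1)=1.5745\ldots$ (the mean distance of a three-step Pearson random walk) while $W_2(1)^2=(4/\pi)^2=1.6211\ldots$, so the inequality \emph{fails} at $p=1$ and, by continuity, on a whole interval $(p^*,1)$. The Mahler-measure heuristic is only a statement about $p\to 0^+$; near $p=1$ this witness (and, plausibly, any single fixed product of this shape) simply does not detect the failure of contractivity, so the ``technical core'' is not a routine verification but a step that would need a genuinely different argument.

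The paper closes exactly this gap with no norm computation at all, by invoking its Theorem \ref{1.1}: once $0\in\Gamma$, the projection $P_\Gamma$ fixes constants, and since it maps monomials to monomials or to $0$ (hence bounded functions to bounded functions on polynomials), Theorem \ref{1.1} shows $P_\Gamma$ agrees on analytic polynomials with a conditional expectation $\mathbb{E}(\cdot|\mathbf{\Sigma}')$. Consequently $\alpha\in\Gamma$ if and only if $z^\alpha$ is $\mathbf{\Sigma}'$-measurable, so $\Gamma$ is closed under addition. This semigroup property yields $\beta+e_j\in\Gamma$ at once, and in particular shows that the cross --- which contains $(1,0)$ and $(0,1)$ but not $(1,1)$ --- can never be a contractive projection set. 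Your proof can be repaired verbatim by replacing the test-function step with this appeal to Theorem \ref{1.1}, applied either to $P_{\mathrm{cross}}$ on $H^p(\mathbb{T}^2)$ or directly to $P_\Gamma$ itself.
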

\begin{proof}
Let $J=\{j: \alpha_j=0,~\forall \alpha\in\Gamma\}$. We will show that $\Gamma=\mathbb{N}_{J}$. Clearly $\Gamma\subseteq\mathbb{N}_{J}$. Next, we prove the converse inclusion. Let $e^{(i)}$ be the canonical  basis of $\mathbb N_0^d$ determined by $e^{(i)}_i =1$ and $e^{(i)}_j =0$ for $j\neq i$. For a $\beta\in \mathbb N_0^d$, we define $\beta^{[i]}$
by $\beta^{[i]}_i=0$ and $\beta^{[i]}_j= \beta_{j}$ for $j\ne i$.	In what follows,  we may assume $\Gamma\neq\{0\}.$

We first claim that if $\beta \in \Gamma$ with $\beta_i\neq 0$, then $\beta^{[i]}+ke^{(i)}\in\Gamma$ for all $k\in \mathbb N_0$. Indeed, for any analytic polynomial $q$ in one single variable, one can verify that the function $$Pq(z):= \bar{z}^{\beta^{[i]}} P_\Gamma \big(z^{\beta^{[i]}} q(z_i)\big)$$ only depends on the variable $z_i$. Therefore, $P$ yields a contractive projection on $H^p(\T)$ with the contractive projection set $\{k \in \mathbb N_0: \beta^{[i]}+k e^{(i)} \in \Gamma\}$. Now our claim follows from Theorem \ref{4.1} because $\beta_i\ne 0$ and $\beta_i$ belongs to this set.

Take a $\beta=(\beta_j)_{j\geq 1} \in \Gamma$ with $ \beta\ne 0$. Since there are only  finitely many $j$ such that $\beta_j\ne 0$,  iteratively applying the previous claim shows $\sum_{\beta_j\ne 0} k_j e^{(j)} \in \Gamma$ for all $k_j\in \mathbb N_0.$  In particular, $0\in \Gamma$. Together this with Theorem \ref{1.1},  $\Gamma$ is a sub-semigroup of $\mathbb N_0^d$.  For  $\alpha=(\alpha_j)_{j\geq 1}\in \mathbb{N}_{J}$, if  $\alpha_l\ne 0$,  then by definition  there exists a $\beta\in \Gamma$ such that $\beta_l\ne 0$, and hence $\alpha_l e^{(l)}\in  \Gamma$. Since $\Gamma$ is a sub-semigroup  of $\mathbb N_0^d$,  we see that    $\alpha = \sum_{l}\alpha_l e^{(l)}\in \Gamma$, completing the proof.
	\end{proof}

We end this paper by reformulating Theorem \ref{4.2} for the Hardy spaces of Dirichlet series $\mathscr{H}^p$ via the Bohr transform, which corresponds to the case $d=\infty$ in Theorem \ref{4.2}. The Hardy space of Dirichlet series $\mathscr{H}^p$ is defined as the closure of all Dirichlet polynomials $f(s) = \sum_{n=1}^N a_n n^{-s}$ under the norm (or quasi-norm when $0 < p < 1$),
\[
\|f\|_{\mathscr{H}^p}^p = \lim_{T \to \infty} \frac{1}{2T} \int_{-T}^T |f(it)|^p \, dt.
\]
It is known that the elements of $\mathscr{H}^p$ are indeed Dirichlet series absolutely converging on the right half plane $ {\rm Re}\, s> 1/2$. Let $p=(p_1,p_2, \ldots )$ be the ordered prime numbers. According to the fundamental theorem of arithmetic, each integer $n\in \mathbb N$ can be uniquely decomposed as $$n=p^{\kappa(n)}:=\prod_{j=1}^\infty p_j^{\kappa_j}, \quad \kappa(n)\in \mathbb N_0^d.$$ By the Birkhoff-Oxtoby ergodic theorem, the Bohr transform
\[
\mathscr{B}: \mathscr{H}^p \rightarrow H^p(\mathbb{T}^\infty), \quad f(s) = \sum_{n=1}^{\infty} a_n n^{-s} \mapsto \mathscr{B}f(z) = \sum_{n=1}^{\infty} a_n z^{\kappa(n)},
\]
establishes an isometric isomorphism between $\mathscr{H}^p$ and $H^p(\mathbb{T}^\infty)$.

\vspace{1mm}
A nonempty subset $\Gamma \subseteq \mathbb{N}$ is termed a contractive projection set for $\mathscr{H}^p$ if the operator $P_\Gamma: \sum_{n=1}^\infty a_n n^{-s} \mapsto \sum_{n \in \Lambda} a_n n^{-s},$ defined on the space of Dirichlet polynomials,  extends to a contraction on $\mathscr{H}^p$.
\begin{theorem}\label{4.3}
	Suppose that $0<p<1$. Then $\Gamma$ is a contractive projection set for $\mathscr{H}^p$ if and only if there is a subset $J$ of all prime numbers such that
	$\Gamma=\{n \in\mathbb N : {\rm all \,\, prime\,\, divisors  \,\,of \,\,} n {\rm\,\, are \, \,in\,\,} J\}$.
\end{theorem}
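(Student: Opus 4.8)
The plan is to deduce Theorem \ref{4.3} from Theorem \ref{4.2} (the case $d=\infty$) by transporting everything through the Bohr transform $\mathscr{B}$. Write $\kappa:\mathbb{N}\to\mathbb{N}_0^{(\infty)}$ for the map sending each integer to the exponent vector of its prime factorization; by the fundamental theorem of arithmetic this is a bijection, with $\kappa(1)=0$. Given a nonempty $\Gamma\subseteq\mathbb{N}$, set $\widetilde\Gamma:=\kappa(\Gamma)\subseteq\mathbb{N}_0^{(\infty)}$. I would first verify that $\mathscr{B}$ intertwines the two coefficient multipliers, that is,
\[
\mathscr{B}\circ P_\Gamma = P_{\widetilde\Gamma}\circ\mathscr{B}
\]
on the space of Dirichlet polynomials. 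This is immediate from $\mathscr{B}(n^{-s})=z^{\kappa(n)}$ together with the equivalence $n\in\Gamma\iff\kappa(n)\in\widetilde\Gamma$, which holds because $\kappa$ is injective.

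Since $\mathscr{B}$ is an isometric isomorphism from $\mathscr{H}^p$ onto $H^p(\mathbb{T}^\infty)$ that carries Dirichlet polynomials bijectively onto analytic polynomials, the intertwining on the (dense) polynomial level shows that $P_\Gamma$ extends to a contraction on $\mathscr{H}^p$ if and only if $P_{\widetilde\Gamma}$ extends to a contraction on $H^p(\mathbb{T}^\infty)$. Indeed, for any Dirichlet polynomial $f$ one has $\|P_\Gamma f\|_{\mathscr{H}^p}=\|P_{\widetilde\Gamma}\mathscr{B}f\|_{H^p}$ and $\|f\|_{\mathscr{H}^p}=\|\mathscr{B}f\|_{H^p}$, so the contraction property passes across $\mathscr{B}$ and then extends by density. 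In other words, $\Gamma$ is a contractive projection set for $\mathscr{H}^p$ precisely when $\widetilde\Gamma$ is a contractive projection set for $H^p(\mathbb{T}^\infty)$. By Theorem \ref{4.2}, the latter holds if and only if $\widetilde\Gamma=\mathbb{N}_J$ for some subset $J$ of the coordinate index set $\{1,2,\ldots\}$.

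It then remains to translate the condition $\widetilde\Gamma=\mathbb{N}_J$ back into arithmetic language, which is the only point requiring any care. Under the identification of the $j$-th coordinate with the prime $p_j$, membership $\kappa(n)\in\mathbb{N}_J$ means that the exponent of $p_j$ in $n$ vanishes for every $j\in J$, i.e.\ that $p_j\nmid n$ for all $j\in J$; equivalently, every prime divisor of $n$ lies in the complementary prime set $\mathcal{P}:=\{p_j: j\notin J\}$. Thus $\widetilde\Gamma=\mathbb{N}_J$ is exactly the statement $\Gamma=\{n\in\mathbb{N}:\text{all prime divisors of }n\text{ are in }\mathcal{P}\}$, and conversely every subset $\mathcal{P}$ of the primes arises from a unique such $J$. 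This yields the claimed characterization. The main (mild) obstacle is purely bookkeeping: one must keep straight that the coordinate set $J$ of Theorem \ref{4.2}, on which the exponents are \emph{forced to vanish}, corresponds to the \emph{complement} of the admissible prime set $\mathcal{P}$ appearing in Theorem \ref{4.3}.
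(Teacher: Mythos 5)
Your proposal is correct and is exactly the paper's (implicit) argument: the paper derives Theorem \ref{4.3} as a ``routine transfer'' of the $d=\infty$ case of Theorem \ref{4.2} through the Bohr transform, which is precisely your intertwining-plus-density argument, including the correct bookkeeping that the admissible prime set in Theorem \ref{4.3} corresponds to the \emph{complement} of the vanishing-coordinate set $J$ in Theorem \ref{4.2}. The only cosmetic imprecision is that the ``if'' half of the equivalence you attribute to Theorem \ref{4.2} actually comes from the remark preceding it (plurisubharmonicity of $|f|^p$ shows each $\mathbb{N}_J$ is a contractive projection set), but that is supplied by the paper as well.
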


\section*{Acknowledgements}
This work was supported by the National Key R\&D Program of China (2024YF A1013400) and the National Natural Science Foundation of China (Grant No. 12231005). Dilong Li was supported by the  National Natural Science Foundation of China (Grant No. 124B2005).

	%-------------------------------------reference-----------------------------------------

\end{document}